\newcommand{\R}{{\mathbb R}}
\newcommand{\Rd}{\R^d}
\newtheorem{assumption}{Assumption}[section]
\begin{document}
\title{{The rigorous derivation of Vlasov equations with local alignments from moderately interacting particle systems}}
\author{Jinhuan Wang, Mengdi Zhuang, Hui Huang
{\thanks{Corresponding author. E-mail: hui.huang@uni-graz.at}}\\
{\small School of Mathematics and Statistics, Liaoning University, Shenyang 110036, PR China}\\
{\small Department of Mathematics and Scientific Computing, University of Graz, Heinrichstra\ss e 36, 8010 Graz, Austria}
} \maketitle
\date{}
\newtheorem{theorem}{Theorem}
\newtheorem{definition}{Definition}[section]
\newtheorem{lemma}{Lemma}[section]
\newtheorem{proposition}{Proposition}[section]
\newtheorem{corollary}{Corollary}
\newtheorem{remark}{Remark}
\renewcommand{\theequation}{\thesection.\arabic{equation}}
\catcode`@=11 \@addtoreset{equation}{section} \catcode`@=12
\maketitle{}
\begin{abstract}
In this paper, we present a rigorous derivation of the mean-field limit for a moderately interacting particle system in $\R^d$ $(d\geq 2)$. For stochastic initial data, we demonstrate that the solution to the interacting particle model, with an appropriately applied cut-off, converges in probabilistic sense to the solution of the characteristics of the regularized Vlasov models featuring local alignments and Newtonian potential. Notably, the cutoff parameter for the singular potential is selected to scale polynomially with the number of particles, representing an improvement over the logarithmic cut-off obtained in \cite{HWL2024}.
\begin{description}
\item[Mathematics Subject Classification:] 35Q83  82C22 35Q70  60K35
\item[Keywords:] Mean-field limit; Propagation of chaos; Local alignment; Newtonian potential; Cucker-Smale model
\end{description}
\end{abstract}


\section{Introduction}
The purpose of this paper is to study the mean-field  limit of a stochastic, moderately interacting particle system in order to derive the following Vlasov type equation with a local alignment term
\begin{align}\label{0.3}
	\partial_tf+v\cdot\nabla_xf-\nabla_v\cdot((\gamma v+\lambda(\nabla_xV+\nabla_xW\star\rho))f)=\nabla_v\cdot(\beta(v-u)f)
\end{align}
subject to the initial data $f(t,x,v)|_{t=0}=:f_0(x,v)$, $(x,v)\in \mathbb{R}^d\times \mathbb{R}^d$. Here $f(t,x,v)$ represents the particle density at $(x,v)\in \mathbb{R}^d\times \mathbb{R}^d$ and at time $t\in  \mathbb{R}_+$, $u$ is the local particle velocity, i.e.
$$
u(t,x):=\frac{\int_{\mathbb{R}^d}vf(t,x,v)\,dv}{\rho(t,x)} \mbox{ with }\rho(t,x):=\int_{\mathbb{R}^d}f(t,x,v)dv\,,$$
$V(x)$ and $W(x)$ are the confinement and the interaction potentials respectively, which shall be specified later. The first two terms on the left side of the equation \eqref{0.3} represent the free transport of the particles. The third term accounts for linear damping with a positive strength parameter $\gamma>0$. In equation \eqref{0.3}, we choose $V(x)=\frac{|x|^2}{2}$ and $\nabla_xW(x)=C_d\frac{x}{|x|^d}$. Additionally, the particle confinement and interaction forces in position are incorporated through potentials $V(x)$ and $W(x)$ with a positive strength parameter $\lambda>0$.  The right-hand side of \eqref{0.3} corresponds to the local alignment force for particles, which was introduced in \cite{KMT2013} as part of swarming models, and $\beta>0$ is a constant.

Equation \eqref{0.3} can be traced back to the model introduced by Cucker and Smale \cite{CS12007,CS22007}, which serves as a simple dynamical system to explain the emergence of flocking mechanisms in systems such as birds, fish, etc. The system considers an interacting $N$-particle system taking the form of
\begin{equation}\label{0.0}
\frac{d}{dt}x_i=v_i\quad\frac{d}{dt}v_i=-\frac{1}{N}\sum_{j=1}^NK_0(x_i,x_j)(v_j-v_i),\quad i=1,\dots,N,
\end{equation}
where each particle is described by its position $x_i\in\mathbb{R}^d$ and velocity $v_i\in\mathbb{R}^d$ with $d\geq 1$, and $K_0$ is a smooth  symmetric kernel. If we consider the empirical distribution function
\begin{align}
f(t,x,v)=\frac{1}{N}\sum_i^N\delta(x-x_i)\delta(v-v_i)\,,
\end{align}
then it will satisfy the following kinetic equation
\begin{align}\label{0.1}
\partial_tf+v\cdot\nabla_xf+\nabla_v\cdot(fL[f])=0, \quad in\quad \mathbb{R}^d\times\mathbb{R}^d\times(0,T),
\end{align}
where the operator $L$ has the form
\begin{align}\label{0.1'}
L[f]:=\int_{\R^{2d}}K_0(x,y)f(y,w)(w-v)\,dwdy.
\end{align}

The Cucker-Smale (CS) model has garnered extensive attention in various fields (see for example \cite{CCDW2015,CFRT2010,CFTF2010,CR2010,CCR2011,CS12007,CS22007,DFT2010,FHT2011,HJNXZ2017,HLSX2014,
	HL2009,HT2008,MT2011}). Especially, the existence of classical solutions to the CS equation has been established in \cite{HT2008}, and the mean-field limit of the CS particle model has been derived in \cite{CZ2021,HL2009}.
In \cite{KMT2013}, in order to prevent mass loss at infinity, a confinement potential $V$ is introduced into the equation \eqref{0.1}. This potential is designed such that $V\to\infty$ as $|x|\to\infty$. As a result, the following equation is obtained
\begin{align*}
\partial_tf+v\cdot\nabla_xf-\nabla_v\cdot(f\nabla_xV)+ \nabla_v\cdot(fL[f])=0, \quad in\quad \mathbb{R}^d\times\mathbb{R}^d\times(0,T).
\end{align*}
In \cite{MT2011}, Motsch and Tadmor noticed that the normalization factor $\frac{1}{N}$ in \eqref{0.0} can lead to some undesirable characteristics. In particular, when a small group of particles is situated far away from a much larger group of particles, the internal dynamics within the small group become almost halted due to the total large number of particles.
 To address this issue, they proposed a modified Cucker-Smale model that replaces the original CS alignment with a normalized non-symmetric alignment
 \begin{align}\label{0.1"}
 \tilde L[f]:=\frac{\int_{\R^{2d}}\phi(x-y)f(y,w)(w-v)dwdy}{\int_{\R^{2d}\times\R^d}\phi(x-y)f(y,w)dwdy}\,.
 \end{align}
  Subsequently, in \cite{KMT2013}, Karper, Mellet, and Trivisa introduced a new model by  additionally considering the singular limit where the Motsch-Tadmor  flocking kernel $\phi$ in $\tilde L[f]$ converges a Dirac distribution. The Motsch-Tadmor correction then converges to a local alignment term
$$
\frac{j-\rho v}{\rho}=u-v,
$$
where
$
j(x,t):=\int_{\mathbb{R}^d}vf(t,x,v)\,dv\,.
$
By incorporating this additional term, the following CS model with strong local alignment, as introduced in \cite{KMT2013}, is obtained
\begin{align}\label{0.2}
\partial_tf+v\cdot\nabla_xf-\nabla_v\cdot(f\nabla_xV)+ \nabla_v\cdot (fL[f])+\beta \nabla_v\cdot(f(u-v))=0\,,
\end{align}
where $\beta>0$ is a constant.
When $V$ satisfies that $V\to\infty$ as $|x|\to\infty$, and $L$ or $\tilde L$ satisfies \eqref{0.1'} or \eqref{0.1"}, the global existence of weak solutions to \eqref{0.2} has been proved in \cite{KMT2013}. If $V(x)=\lambda\frac{|x|^2}{2}\,,L[f]=-\gamma v-\lambda(\nabla_xW\star\rho)$ and $\nabla_xW(x)=C_d\frac{x}{|x|^d}$, the global existence of weak solutions to \eqref{0.3} also has been proved in \cite{CC2020} and \cite{HWL2024}. 

 As in \cite{HWL2024}, we aim to investigate the mean-field limit of the following moderately interacting particle system which will lead to the Vlasov equation \eqref{0.3}
 \begin{equation}\label{particle}
 	\left\{
 	\begin{aligned}
 		&\frac{dx_i^{\varepsilon,\delta}}{dt}=v_i^{\varepsilon,\delta},\quad i=1,\dots,N,\\
 		&\frac{dv_i^{\varepsilon,\delta}}{dt}=-\frac{\lambda}{N}\sum_{j=1}^N\nabla_x W^\varepsilon(x_i^{\varepsilon,\delta}-x_j^{\varepsilon,\delta})
 		-(\gamma v_i^{\varepsilon,\delta}+\lambda\nabla_x V(x_i^{\varepsilon,\delta})+\beta v_i^{\varepsilon,\delta})
 		+\beta u^{\varepsilon,\delta}(x_i^{\varepsilon,\delta})\,,
 	\end{aligned}
 	\right.
 \end{equation}
where
\begin{equation*}
	\nabla_x W^\varepsilon(x)=\left\{
	\begin{aligned}
		&C_d\frac{x}{|x|^d},&&\quad|x|\ge\varepsilon,\\
		&C_d\varepsilon^{-d}x,&&\quad|x|<\varepsilon
	\end{aligned}
	\right.
\end{equation*}
is the regularization of the singular interaction force $\nabla_x W$
and
\begin{equation}\label{local}
{u}^{\varepsilon,\delta}({x}_i^{\varepsilon,\delta})=
\frac{\frac{1}{N}\sum\limits_{j=1}^N\phi_R({v}_i^{\varepsilon,\delta})\phi^\varepsilon({x}_i^{\varepsilon,\delta}-{x}_j^{\varepsilon,\delta})}{\frac{1}{N}\sum\limits_{j=1}^N\phi^{\varepsilon}({x}_i^{\varepsilon,\delta}-{x}_j^{\varepsilon,\delta})+\delta}
\end{equation}
is an approximation of the local velocity $u$. Here $\phi^{\varepsilon}:=\frac{1}{\varepsilon^d}\phi(\frac{x}{\varepsilon})$ with  $\phi$ being the standard mollifier. Then the function
$\phi^{\varepsilon}$ is $C_c^\infty(\mathbb{R}^d)$ and satisfies $\int_{\mathbb{R}^d}\phi^{\varepsilon} dx=1$. For the convenience of subsequent calculations, we define $\phi_R(v_j^{\varepsilon,\delta}):=v_j^{\varepsilon,\delta}\cdot h(\frac{|v_j^{\varepsilon,\delta}|}{R})$, where $h(r)\in C^2([0,\infty))$ satisfying
\begin{equation*}
	h(r)=\left\{
	\begin{aligned}
		&1,&&\quad r<1,\\
		&0,&&\quad r\ge2,
	\end{aligned}
	\right.
	\quad \mbox{ and }\quad  0\le h\le 1\,.
	\end {equation*}
Note that in the notation $({x}_i^{\varepsilon,\delta},{v}_i^{\varepsilon,\delta})$, we omit the superscript $R$ for the particles. This omission is because we will later choose $R=1/\delta$.

In this article we will show that equation \eqref{0.3} can be derived from particle system \eqref{particle} in the limit of $N\to \infty$, $\varepsilon,\delta \to 0$, with the scaling relation
\begin{equation}
	\varepsilon=N^{-\theta} \mbox{ and }\delta=R^{-1}=\frac{1}{\sqrt{\vartheta \ln(N)}}
\end{equation}
for some constants $\theta,\vartheta>0$.
To achieve this, we start by fixing $\varepsilon,\delta>0$. We then undertake a classical mean-field limit from \eqref{particle} to the auxiliary intermediate system given as follows:
{\small \begin{equation}\label{0.6}
\left\{
\begin{aligned}
&\frac{d\overline{x}_i^{\varepsilon,\delta}}{dt}=\overline{v}_i^{\varepsilon,\delta},\quad i=1,\dots,N,\\
&\frac{d\overline{v}_i^{\varepsilon,\delta}}{dt}=-\lambda\int_{\mathbb{R}^d}\nabla_xW^\varepsilon(\overline{x}_i^{\varepsilon,\delta}-y)\rho^{\varepsilon,\delta}(t,y)dy
-(r\overline{v}_i^{\varepsilon,\delta}+\lambda\nabla_x V(\overline{x}_i^{\varepsilon,\delta})+\beta \overline{v}_i^{\varepsilon,\delta})
+\beta\overline{u}^{\varepsilon,\delta}(\overline{x}_i^{\varepsilon,\delta})\,,\\
& f^{\varepsilon,\delta}(x,v,t)=\mbox{Law}(\overline{x}_i^{\varepsilon,\delta},\overline{x}_i^{\varepsilon,\delta}),\quad \rho^{\varepsilon,\delta}=\int_{\Rd} f^{\varepsilon,\delta}dv
\end{aligned}
\right.
\end{equation}}
with the same initial data as \eqref{particle} and
$$
\overline{u}^{\varepsilon,\delta}(\overline{x}_i^{\varepsilon,\delta})=
\frac{\int_{\R^{2d}}\phi_R(v)\phi^\varepsilon(\overline{x}_i^{\varepsilon,\delta}-y) f^{\varepsilon,\delta}(t,y,v)dvdy}{\int_{\R^{2d}}\phi^{\varepsilon}(\overline{x}_i^{\varepsilon,\delta}-y) f^{\varepsilon,\delta}(t,y,v)dvdy+\delta}\,.
$$
Note here that we consider $N$ independent copies $(\overline{x}_i^{\varepsilon,\delta},\overline{v}_i^{\varepsilon,\delta})$, $i=1,\dots,N$, and the intermediate system depends on $i$ only through the initial datum. Secondly, it is easy to check that the density function $f^{\varepsilon,\delta}$ shall satisfy the PDE
{\small \begin{align}\label{0.5} \partial_tf^{\varepsilon,\delta}+v\cdot\nabla_xf^{\varepsilon,\delta}-\nabla_v\cdot((\gamma v+\lambda (\nabla_xV+\nabla_xW^\varepsilon\ast\rho^{\varepsilon,\delta}))f^{\varepsilon,\delta})=\nabla_v\cdot(\beta(v-u^{\varepsilon,\delta})f^{\varepsilon,\delta})\,,
\end{align}}
which is a regularized version of our original model \eqref{0.3}. Finally, if we let $\varepsilon,\delta\to0$, one expects that $f^{\varepsilon,\delta}$ converges to $f$, which is exactly the solution to \eqref{0.3} in weak sense (See the literatures \cite{CC2020} and \cite{HWL2024} for detailed proof).

Mean-field limits from stochastic differential equations have been extensively studied since the 1980s. Comprehensive reviews by Golse \cite{G2003} and Jabin and Wang \cite{JW2017}, alongside the seminal works of Sznitman \cite{S1984, S1991}, provide a thorough overview of this field.  It has been established that in the limit of many particles, weakly interacting stochastic particle systems converge to a deterministic nonlinear process, as demonstrated by Oelschläger \cite{O1984}. He further extended this approach to systems of reaction-diffusion equations \cite{O1989} and porous-medium-type equations with quadratic diffusion \cite{oelschlager1990large} through the use of moderate interactions in stochastic partial-numerical simulations.
The particle system \eqref{particle} is categorized as moderately interacting due to the incorporation of a nonlocal term \eqref{local} to approximate local velocity dynamics.  Moderately interacting particle systems have also been instrumental in deriving a range of equations, including the porous medium equations with exponent 2 \cite{JM1998, philipowski2007interacting}, the diffusion-aggregation equation with a delta potential \cite{chen2018modeling}, cross-diffusion equations \cite{CDHJ2021, chen2019rigorous}, and the porous-medium equation with fractional diffusion \cite{chen2022analysis}.

This paper provides the detailed derivation form the moderately interacting particle system to its mean field limit or Vlasov equation. Without relying on the BBGKY hierarchy (\cite{EGKT2014,S2012}), we will rigorously derive the kinetic equation using a probabilistic method. This method has been previously demonstrated in various models: it was applied to a pedestrian flow model in \cite{CGY2017}, to diffusion-aggregation equations with bounded kernels in \cite{chen2023well}, and to the Vlasov-Poisson equation in \cite{huang2020mean,lazarovici2017mean}. The model proposed in this paper presents two significant challenges. The first issue arises from the singular nature of the interaction force. Another difficulty involves the treatment of the local alignment term $u$. The denominator of $u$ is the density function $\rho$, which may approach zero (a vacuum state). This scenario requires additional work on the estimates and, to our knowledge, has rarely been addressed before.

We now briefly outline our approach to achieving convergence between the particle system and the mean field equation. Initially, we utilize the moderately interacting particle system with a cutoff as detailed in \eqref{particle} as our foundational model. In Theorem \ref{th3.1}, we demonstrate that this system and the auxiliary intermediate system (Vlasov flow with a cutoff) \eqref{0.6} converge closely as $N \to \infty$. Subsequently, it can be verified that the density function $f^{\varepsilon,\delta}$ fulfills the requirements of the PDE \eqref{0.5}. Ultimately, by allowing $\varepsilon, \delta \to 0$, we anticipate that $f^{\varepsilon,\delta}$ will converge to $f$, which solves \eqref{0.3} (refer to the detailed proofs in \cite{CC2020} and \cite{HWL2024}). The steps of this procedure can be summarized as follows:
\\
\begin{center}
\setlength{\unitlength}{0.3cm}
\begin{picture}(11,11)

\linethickness{0.3mm}

\put(1,6){\vector(0,-1){3}}
\put(1,-10){\vector(0,-1){3}}
\put(1,-4){\vector(0,-1){3}}
\thinlines
\put(-15,10){\small${\frac{dx_i^{\varepsilon,\delta}}{dt}=v_i^{\varepsilon,\delta}
 },\quad i=1\dots,N$}
 \put(-15,7){\small${\frac{dv_i^{\varepsilon,\delta}}{dt}=-\frac{\lambda}{N}\sum\limits_{j=1}^N\nabla_x W^\varepsilon(x_i^{\varepsilon,\delta}-x_j^{\varepsilon,\delta})
 		-(\gamma v_i^{\varepsilon,\delta}+\lambda\nabla_x V(x_i^{\varepsilon,\delta})+\beta v_i^{\varepsilon,\delta})
 		+\beta u^{\varepsilon,\delta}(x_i^{\varepsilon,\delta})\qquad\qquad(a)}$}
 \put(1.5,4.5){\small${N\to\infty}$}
 \put(-15,-9){\small${\partial_tf^{\varepsilon,\delta}+v\cdot\nabla_xf^{\varepsilon,\delta}-\nabla_v\cdot((\gamma v+\lambda(\nabla_xV+\nabla_xW^\varepsilon\ast\rho^{\varepsilon,\delta}))f^{\varepsilon,\delta})=\nabla_v\cdot(\beta(v-u^{\varepsilon,\delta})f^{\varepsilon,\delta})~\qquad(c)}$}
 \put(1.5,-12){\small $\varepsilon,\delta\to 0$}
 \put(-15,-15){\small ${\partial_tf+v\cdot\nabla_xf-\nabla_v\cdot((\gamma v+\lambda(\nabla_xV+\nabla_xW\star\rho))f)=\nabla_v\cdot(\beta(v-u)f)\quad\qquad\qquad\qquad\qquad(d)}$}
 \put(1.5,-6){\small It\^ {o}'s formula}
 \put(-15,0){\small${
\frac{d\overline{x}_i^{\varepsilon,\delta}}{dt}=\overline{v}_i^{\varepsilon,\delta}},\quad i=1\dots,N$} \put(-15,-3){\small${
\frac{d\overline{v}_i^{\varepsilon,\delta}}{dt}=-\lambda\int_{\mathbb{R}^d}\nabla_xW^\varepsilon(\overline{x}_i^{\varepsilon,\delta}-y)\rho^{\varepsilon,\delta}(t,y)dy
-(r\overline{v}_i^{\varepsilon,\delta}+\lambda\nabla_x V(\overline{x}_i^{\varepsilon,\delta})+\beta \overline{v}_i^{\varepsilon,\delta})
+\beta\overline{u}^{\varepsilon,\delta}(\overline{x}_i^{\varepsilon,\delta})
\quad(b)}$}
\end{picture}
\end{center}\quad
\\\\\\\\\\\\
\\
\\
\\
Our paper primarily focuses on proving the first step $(a) \to (b)$, where we demonstrate that under certain conditions, the trajectories of the particle systems $(a)$ and $(b)$ are sufficiently close.

This article is organized as follows: Section 2 introduces some notations and preliminary work. Section 3 clarifies the main result and provides the corresponding proof.



\section{Notation and Preliminary work}

In order to present the analytical results in the follow section, we restrict to the following notations by rewriting \eqref{particle} and \eqref{0.6}.
Let $(X_t^{\varepsilon,\delta},V_t^{\varepsilon,\delta})$ be the trajectory on $\mathbb{R}^{2dN}$ which evolves according to the equation of motion with cut-off, i.e.,
\begin{equation}\label{def2.1-1}
\left\{
\begin{aligned}
&\frac{d}{dt}X_t^{\varepsilon,\delta}=V_t^{\varepsilon,\delta},\\
&\frac{d}{dt}V_t^{\varepsilon,\delta}=\Psi^{\varepsilon,\delta}(X_t^{\varepsilon,\delta})
+\Gamma(X_t^{\varepsilon,\delta},V_t^{\varepsilon,\delta})+\Phi^{\varepsilon,\delta}(X_t^{\varepsilon,\delta}),
\end{aligned}
\right.
\end{equation}
where $\Psi^\varepsilon(X_t^{\varepsilon,\delta})$ denotes the interaction force with
$$
(\Psi^{\varepsilon,\delta}(X_t^{\varepsilon,\delta}))_i
=-\frac{\lambda}{N}\sum_{j=1}^N\nabla_x W^\varepsilon(x_i^{\varepsilon,\delta}-x_j^{\varepsilon,\delta}),
$$
while
$$(\Gamma(X_t^{\varepsilon,\delta},V_t^{\varepsilon,\delta}))_i
=-G(x_i^{\varepsilon,\delta},v_i^{\varepsilon,\delta})
=-(rv_i^{\varepsilon,\delta}+\lambda\nabla_x V(x_i^{\varepsilon,\delta})+\beta v_i^{\varepsilon,\delta})$$
and
$$
(\Phi^{\varepsilon,\delta}(X_t^{\varepsilon,\delta}))_i
:=\beta u^{\varepsilon,\delta}(x_i^{\varepsilon,\delta}).
$$
Let $(\overline{X}_t^{\varepsilon,\delta},\overline{V}_t^{\varepsilon,\delta})$ be the trajectory on $\mathbb{R}^{2dN}$ which evolves according to the kinetic equation \eqref{0.5}, i.e.,
\begin{equation}\label{def2.2-1}
\left\{
\begin{aligned}
&\frac{d}{dt}\overline{X}_t^{\varepsilon,\delta}=\overline{V}_t^{\varepsilon,\delta},\\
&\frac{d}{dt}\overline{V}_t^{\varepsilon,\delta}=\overline{\Psi}^{\varepsilon,\delta}(\overline{X}_t^{\varepsilon,\delta})
+\Gamma(\overline{X}_t^{\varepsilon,\delta},\overline{V}_t^{\varepsilon,\delta})+\overline{\Phi}^{\varepsilon,\delta}(\overline{X}_t^{\varepsilon,\delta}),
\end{aligned}
\right.
\end{equation}
where
$(\overline{\Psi}^{\varepsilon,\delta}(\overline{X}_t^\varepsilon))_i=-\lambda\int_{\Rd}\nabla_x W^\varepsilon(\overline{x}_i^{\varepsilon,\delta}-y)\rho^{\varepsilon,\delta}(t,y)dy$ and $(\overline{\Phi}^{\varepsilon,\delta}(\overline{X}_t^{\varepsilon,\delta}))_i
=\beta\overline{u}^{\varepsilon,\delta}(\overline{x}_i^{\varepsilon,\delta})$.

In this paper we choose $(X,V)$ and $(\overline{X},\overline{V})$ represent the stochastic initial data, which are independent and identically distributed. Note we consider the same initial data for both model, that means $(X,V)=(\overline{X},\overline{V})$.

First, we point out several properties for the regularized interaction force $\nabla_x W^\varepsilon(x)$  and $G(x,v)$.
\begin{lemma}\label{lm2.1}
There are several facts for $\nabla_x W^\varepsilon(x)$ and $G(x,v)$.

\item{(i)} $\nabla_x W^\varepsilon(x)$ is bounded, i.e., $|\nabla_x W^\varepsilon(x)|\le C\varepsilon^{-(d-1)}$ .

\item{(ii)} $\nabla_x W^\varepsilon(x)$ satisfies
$$
|\nabla_x W^\varepsilon(x)-\nabla_x W^\varepsilon(y)|\le q^\varepsilon(x)|x-y|,\qquad \forall~~|x-y|<2\varepsilon,
$$
where
\begin{equation*}
q^\varepsilon(x):=\left\{
\begin{aligned}
&\frac{C}{|x|^d},&&\quad|x|\ge3\varepsilon,\\
&C\varepsilon^{-d},&&\quad|x|<3\varepsilon.
\end{aligned}
\right.
\end{equation*}
\item{(iii)} $G(x,v)$ is Lipschitz continuous, i.e.,
$$
|G(x,v)-G(x',v')|
\le L(|x-x'|+|v-v'|).
$$
\end{lemma}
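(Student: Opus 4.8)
\emph{Plan.} I would treat the three parts separately, dispatching (i) and (iii) by direct computation and devoting the real work to (ii). For (i): if $|x|\ge\varepsilon$ then $|\nabla_x W^\varepsilon(x)|=C_d|x|^{-(d-1)}\le C_d\varepsilon^{-(d-1)}$, while if $|x|<\varepsilon$ then $|\nabla_x W^\varepsilon(x)|=C_d\varepsilon^{-d}|x|\le C_d\varepsilon^{-(d-1)}$, so (i) holds with $C=C_d$. For (iii): since $V(x)=|x|^2/2$ we have $\nabla_x V(x)=x$, so $G(x,v)=(r+\beta)v+\lambda x$ is affine in $(x,v)$, whence $|G(x,v)-G(x',v')|\le\lambda|x-x'|+(r+\beta)|v-v'|\le L(|x-x'|+|v-v'|)$ with $L:=\max\{\lambda,\,r+\beta\}$.

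\emph{Core step (ii).} The plan is to bound the Jacobian $D_x(\nabla_x W^\varepsilon)$ pointwise in each region and then integrate along the segment joining $x$ and $y$. On $\{|z|<\varepsilon\}$ one has $D_x(\nabla_x W^\varepsilon)(z)=C_d\varepsilon^{-d}\,\mathrm{Id}$, of operator norm $C_d\varepsilon^{-d}$; on $\{|z|>\varepsilon\}$ a direct differentiation of $C_d z/|z|^d$ gives $D_x(\nabla_x W^\varepsilon)(z)=C_d\bigl(|z|^{-d}\mathrm{Id}-d\,|z|^{-d-2}\,z\otimes z\bigr)$, whose operator norm is at most $C|z|^{-d}$ (in particular $\le C\varepsilon^{-d}$ there). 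Since $\nabla_x W^\varepsilon$ is continuous everywhere, is $C^1$ off the sphere $\{|z|=\varepsilon\}$, and has Jacobian bounded by $C\varepsilon^{-d}$, it is globally Lipschitz, hence absolutely continuous along every line segment, so that $\nabla_x W^\varepsilon(x)-\nabla_x W^\varepsilon(y)=\int_0^1 D_x(\nabla_x W^\varepsilon)\bigl(y+t(x-y)\bigr)(x-y)\,dt$, with the integrand defined for a.e.\ $t$. Therefore $|\nabla_x W^\varepsilon(x)-\nabla_x W^\varepsilon(y)|\le|x-y|\sup_{z\in[x,y]}\bigl\|D_x(\nabla_x W^\varepsilon)(z)\bigr\|$, and it remains to bound this supremum by $q^\varepsilon(x)$ under the hypothesis $|x-y|<2\varepsilon$.

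\emph{Case split.} If $|x|<3\varepsilon$, then at every $z\in[x,y]$ the Jacobian norm is $\le C_d\varepsilon^{-d}$ when $|z|<\varepsilon$ and $\le C|z|^{-d}\le C\varepsilon^{-d}$ when $|z|\ge\varepsilon$; either way it is $\le C\varepsilon^{-d}=q^\varepsilon(x)$. If $|x|\ge3\varepsilon$, then for $z\in[x,y]$ we have $|z|\ge|x|-|x-y|>|x|-2\varepsilon\ge|x|-\tfrac23|x|=\tfrac13|x|>\varepsilon$, so the whole segment lies in the outer region and there $\bigl\|D_x(\nabla_x W^\varepsilon)(z)\bigr\|\le C|z|^{-d}\le C\bigl(\tfrac13|x|\bigr)^{-d}=C'|x|^{-d}$, which is $q^\varepsilon(x)$ after renaming the constant. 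Combining the two cases yields (ii) with a common constant $C$ taken large enough.

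\emph{Main obstacle.} The hard point is the regime $|x|\ge3\varepsilon$ in (ii): one must guarantee both that the connecting segment never re-enters the ball $\{|z|\le\varepsilon\}$, where the Jacobian has the much larger size $\varepsilon^{-d}$, and that $|z|$ stays comparable to $|x|$ along the segment so that $|z|^{-d}\lesssim|x|^{-d}$. This is precisely where the gap between the threshold $3\varepsilon$ used in $q^\varepsilon$ and the hypothesis $|x-y|<2\varepsilon$ enters, through $|z|\ge|x|-2\varepsilon\ge|x|/3$. A secondary technical nuisance is that $\nabla_x W^\varepsilon$ is merely Lipschitz, not $C^1$, across $\{|z|=\varepsilon\}$, which is why the argument is phrased via absolute continuity along segments; the exceptional set $\{t\in[0,1]:|y+t(x-y)|=\varepsilon\}$ is finite and therefore negligible in the integral.
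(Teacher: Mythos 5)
Your proposal is correct and follows essentially the same route as the paper: (i) and (iii) by direct computation from the definitions, and (ii) by a mean-value/segment-integration argument with the same case split at $|x|\ge 3\varepsilon$, using $|x-y|<2\varepsilon\le\tfrac23|x|$ to keep the segment in the outer region and to get $|z|\ge\tfrac13|x|$, hence $\|D_x(\nabla_xW^\varepsilon)(z)\|\le 3^dC_d|x|^{-d}$. Your explicit handling of the mere Lipschitz (non-$C^1$) regularity across $\{|z|=\varepsilon\}$ via absolute continuity along segments is a small refinement the paper glosses over, but the substance is identical.
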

\begin{proof}
 Conclusions $(i)$ and $(iii)$ can easy be achieved from the definition of  $\nabla_x W^\varepsilon(x)$ and $G(x,v)$. Thus, we omit the proof of results $(i)$ and $(iii)$.

As for $(ii)$, it follows from \cite[Lemma 6.3]{lazarovici2017mean} or \cite[Lemma 2.2]{boers2016mean}. Let $\zeta=y-x$. Indeed for $|x|<3\varepsilon$, by the definition of $\nabla_x W^\varepsilon$ it is easy to get
$$
|\nabla_x W^\varepsilon(x)-\nabla_x W^\varepsilon(y)|=|\nabla_x W^\varepsilon(x+\zeta)-\nabla_x W^\varepsilon(x)|
\le|\nabla^2_xW^\varepsilon(x')||\zeta|\le C\varepsilon^{-d}|\zeta|.
$$ For the case of $|x|\ge3\varepsilon$, there exists some with $s\in[0,1]$ such that
$$
|\nabla_x W^\varepsilon(x)-\nabla_x W^\varepsilon(y)|=|\nabla_x W^\varepsilon(x+\zeta)-\nabla_x W^\varepsilon(x)|
\le|\nabla^2_xW^\varepsilon(x+s\zeta)||\zeta|
$$
holds. One further notices that
{\small\begin{align*}
|\nabla^2_xW^\varepsilon(x+s\zeta)|\le C_d\frac{1}{|x+s\zeta|^d}\le C_d\frac{1}{\Big|x-\frac{x}{|x|}|\zeta|\Big|^d}
=C_d\frac{1}{\Big|x(1-\frac{|\zeta|}{|x|})\Big|^d}\le C_d\frac{1}{\Big|x(1-\frac{2}{3})\Big|^d}=\frac{3^dC_d}{|x|^d}.
\end{align*}}
Then we have
$$
|\nabla_x W^\varepsilon(x)-\nabla_x W^\varepsilon(y)|\le\frac{C_d}{|x|^d}|\zeta|.
$$
Therefore, with the arguments presented above, we have successfully completed the proof of $(ii)$.
\end{proof}
For the subsequent proof, we need the following assumptions.
\begin{assumption}\label{assum}
	We assume that there exists a time $t>0$ and a constant $C$ independent of $\varepsilon,\delta$ such that the solution $f^{\varepsilon,\delta}(t,x,v)$ of the kinetic equation \eqref{0.5} satisfies
	\begin{enumerate}[label=(\roman*)]
		\item
		$$
\sup_{0\le s\le t}\Big\|\int_{\Rd} |v|f^{\varepsilon,\delta}(s,\cdot,v)dv\Big\|_\infty\le C,
		$$
		\item $$
		\sup_{0\le s\le t}\Big\|\int_{\R^{2d}}|(\phi^{\varepsilon})'(x-y)|f^{\varepsilon,\delta}(s,y,v)dydv\Big\|_\infty\le C,
		$$
		\item $$
		\sup_{0\le s\le t}\Big\|\int_{\R^{2d}}\frac{1}{|x-y|^d}f^{\varepsilon,\delta}(s,y,v)dydv\Big\|_\infty\le C.
		$$
	\end{enumerate}
\end{assumption}
\begin{definition}\label{def3.1}
	Let $\theta\in(0,\frac{1}{9d+2}), \, \alpha\in\Big(\theta,\frac{1-(9d-3)\theta}{5}\Big)$ and $S_t: \mathbb{R}^{2dN}\times\mathbb{R}\rightarrow\mathbb{R}$ be the stochastic process given by
	$$
	S_t=\min\Big\{1,N^\alpha \sup_{0\le s\le t}\Big|(X_s^{\varepsilon,\delta},V_s^{\varepsilon,\delta})-(\overline{X}_s^{\varepsilon,\delta},\overline{V}_s^{\varepsilon,\delta})\Big|_\infty\Big\}.
	$$
	The set, where $|S_t|=1$, is defined as $\mathcal{N}_\alpha$, i.e.,
	\begin{align}\label{def3.1-1}
		\mathcal{N}_\alpha:=\Big\{(X,V): \sup_{0\le s\le t}\Big|(X_s^{\varepsilon,\delta},V_s^{\varepsilon,\delta})-(\overline{X}_s^{\varepsilon,\delta},\overline{V}_s^{\varepsilon,\delta})\Big|_\infty>N^{-\alpha}\Big\}.
	\end{align}
	Here and in the following we use $|\cdot|_\infty$ as the supremum norm on $\mathbb{R}^{2dN}$. Note that
	$$
	\mathbb{E}_0(S_{t+dt}-S_t|\mathcal{N}_\alpha)\le0,
	$$
	since $S_t$ takes the value of one for $(X,V)\in\mathcal{N}_\alpha$.
\end{definition}
\section{Rigorous derivation of the mean-field limit}
In this section, we give the rigorous derivation of the mean-field limit from the moderately interacting particle \eqref{particle} to the auxiliary intermediate system \eqref{0.6} in the limit of $N\to\infty$. Now we give the main result for our paper.
\begin{theorem}\label{th3.1}
Let $(X_s^{\varepsilon,\delta},V_s^{\varepsilon,\delta})$ and $(\overline{X}_s^{\varepsilon,\delta},\overline{V}_s^{\varepsilon,\delta})$ be solutions to \eqref{def2.1-1} and \eqref{def2.2-1}, respectively. Let $f^{\varepsilon,\delta}(t,x,v)$ be the distribution law of them, and it is the solution to the kinetic equation \eqref{0.5} satisfying Assumption \ref{assum}. Then there exists a constant $C$ such that
$$
\mathbb{P}_0\Big(\sup_{0\le s\le t}\Big|(X_s^{\varepsilon,\delta},V_s^{\varepsilon,\delta})-(\overline{X}_s^{\varepsilon,\delta},\overline{V}_s^{\varepsilon,\delta})\Big|_\infty>N^{-\alpha}\Big)
\le C \exp\Big\{\Big(C+C\vartheta\ln(N)\Big)t\Big\}\cdot N^{-n},
$$
where $\varepsilon=N^{-\theta},\,\theta\in(0,\frac{1}{9d+2}),\,
\alpha\in\Big(\theta,\frac{1-(9d-3)\theta}{5}\Big),\,
\kappa\in\Big((d-1)\theta+\alpha,\frac{1-5(d-1)\theta-\alpha}{4}\Big),\,
\gamma\in\Big(0,\frac{1-(3d-1)\theta-\alpha}{4}\Big),\,
\eta\in\Big((d-1)\theta+\alpha,\frac{1-(5d+1)\theta-\alpha}{4}\Big),\,
\mu\in\Big(0,\frac{1-(5d+3)\theta}{4}\Big),\,n=\min\{1-5(d-1)\theta-4\kappa-\alpha,\,\,1-(3d-1)\theta-4\gamma-\alpha,\,\,1-(5d+1)\theta-4\eta-\alpha,\,\,1-(5d+3)\theta-4\mu-\alpha,\,\,\kappa-\alpha-(d-1)\theta,\,\,\eta-\alpha-(d-1)\theta\},\,\,\delta=\frac{1}{\sqrt{\vartheta \ln N}}$, and $\vartheta\in\Big(0,\min\{\frac{n}{Ct},\theta\}\Big)$.
\end{theorem}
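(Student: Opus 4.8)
The plan is to run a Gronwall-type stopping-time argument on the auxiliary process $S_t$ introduced in Definition \ref{def3.1}, estimating the drift $\mathbb{E}_0(S_{t+dt}-S_t\mid \mathcal{N}_\alpha^c)$ termwise according to the three forces $\Psi^{\varepsilon,\delta},\Gamma,\Phi^{\varepsilon,\delta}$. First I would write the difference of trajectories in integral form: since both \eqref{def2.1-1} and \eqref{def2.2-1} start from the same initial datum, $(X_s^{\varepsilon,\delta},V_s^{\varepsilon,\delta})-(\overline{X}_s^{\varepsilon,\delta},\overline{V}_s^{\varepsilon,\delta})$ is the time integral of the difference of the right-hand sides. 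On the complement of $\mathcal{N}_\alpha$ the trajectories stay within $N^{-\alpha}$ of each other, so one can Taylor-expand/Lipschitz-expand each force around the intermediate trajectory. The $\Gamma$-term is handled directly by Lemma \ref{lm2.1}(iii), contributing a clean $L(|x-x'|+|v-v'|)$ bound. The genuinely delicate contributions are (a) the singular interaction force $\Psi^{\varepsilon,\delta}$ versus its mean-field counterpart $\overline{\Psi}^{\varepsilon,\delta}$, and (b) the local-alignment term $\Phi^{\varepsilon,\delta}=\beta u^{\varepsilon,\delta}$ versus $\overline{\Phi}^{\varepsilon,\delta}=\beta\overline{u}^{\varepsilon,\delta}$.

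For (a), the standard split is $\Psi^{\varepsilon,\delta}(X_s)-\overline{\Psi}^{\varepsilon,\delta}(\overline{X}_s) = \big(\Psi^{\varepsilon,\delta}(X_s)-\Psi^{\varepsilon,\delta}(\overline{X}_s)\big) + \big(\Psi^{\varepsilon,\delta}(\overline{X}_s)-\overline{\Psi}^{\varepsilon,\delta}(\overline{X}_s)\big)$. The first bracket is a ``consistency on the good set'' estimate: using Lemma \ref{lm2.1}(ii), on $\mathcal{N}_\alpha^c$ the increment $|x_i-x_j|$ changes by at most $2N^{-\alpha}<2\varepsilon$ (this is exactly why $\alpha>\theta$ is imposed), so the Lipschitz-type bound with weight $q^\varepsilon$ applies, and one controls $\frac{1}{N}\sum_j q^\varepsilon(\overline{x}_i-\overline{x}_j)$ by its expectation $\int\frac{1}{|x-y|^d}f^{\varepsilon,\delta}\,dy\,dv$ plus a fluctuation, invoking Assumption \ref{assum}(iii) and a concentration inequality (Bernstein/Hoeffding) to absorb the fluctuation into the $N^{-n}$ error with the stated exponent $\kappa$. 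The second bracket is a law-of-large-numbers fluctuation: $\Psi^{\varepsilon,\delta}(\overline{X}_s)$ is an empirical average of i.i.d.\ terms $-\lambda\nabla_x W^\varepsilon(\overline{x}_i-\overline{x}_j)$ whose mean is $\overline{\Psi}^{\varepsilon,\delta}(\overline{X}_s)$; each term is bounded by $C\varepsilon^{-(d-1)}=CN^{(d-1)\theta}$ by Lemma \ref{lm2.1}(i), and a concentration estimate gives a deviation of size $N^{-\gamma}$ off a bad set of probability $\le N^{-n}$. The scaling constraints on $\kappa,\gamma$ are precisely what make these two error exponents positive after accounting for the $N^{(d-1)\theta}$ blow-up and the $N^\alpha$ prefactor in $S_t$.

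For (b), one expands $u^{\varepsilon,\delta}(x_i) - \overline{u}^{\varepsilon,\delta}(\overline{x}_i)$ as a quotient difference $\frac{A}{B}-\frac{\overline{A}}{\overline{B}} = \frac{A-\overline{A}}{B} + \overline{A}\,\frac{\overline{B}-B}{B\overline{B}}$, where the denominators $B=\frac1N\sum_j\phi^\varepsilon(x_i-x_j)+\delta$ and $\overline{B}$ are bounded below by $\delta=1/\sqrt{\vartheta\ln N}$ — this is where the vacuum issue is tamed, at the cost of a factor $\delta^{-2}\sim\vartheta\ln N$, which is exactly the source of the $\exp\{C\vartheta\ln(N)\,t\}=N^{C\vartheta t}$ factor in the final bound (and why one needs $\vartheta<n/(Ct)$ so that this is beaten by $N^{-n}$). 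The numerator $A=\frac1N\sum_j\phi_R(v_i)\phi^\varepsilon(x_i-x_j)$ and $\phi_R(v_i)=v_i h(|v_i|/R)$ is bounded by $2R=2/\delta$, and $|(\phi^\varepsilon)'|$ controls the Lipschitz dependence on $x$; one again splits into a good-set Lipschitz part (using Assumption \ref{assum}(ii) to bound $\frac1N\sum_j|(\phi^\varepsilon)'(\overline{x}_i-\overline{x}_j)|$ by its mean plus fluctuation, exponent $\eta$) and an LLN fluctuation part (exponent $\mu$), with Assumption \ref{assum}(i) supplying the $\int|v|f^{\varepsilon,\delta}$ bound needed for the velocity factor. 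Collecting all contributions yields, on $\mathcal{N}_\alpha^c$, a differential inequality of the form $\mathbb{E}_0(S_{t+dt}-S_t\mid\mathcal{N}_\alpha^c)\le (C+C\vartheta\ln N)\,\mathbb{E}_0(S_t)\,dt + C N^{\alpha-n}\,dt$; combined with $\mathbb{E}_0(S_{t+dt}-S_t\mid\mathcal{N}_\alpha)\le 0$, Gronwall gives $\mathbb{E}_0(S_t)\le C e^{(C+C\vartheta\ln N)t}N^{\alpha-n}$, and Markov's inequality $\mathbb{P}_0(\mathcal{N}_\alpha)=\mathbb{P}_0(S_t=1)\le\mathbb{E}_0(S_t)$ finishes the proof after relabeling constants. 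The main obstacle I expect is the bookkeeping in (b): simultaneously controlling the quotient structure, the velocity cutoff $\phi_R$ with $R=1/\delta$ growing, and the $\delta^{-2}$ denominator blow-up, while keeping every error exponent strictly positive — this is what forces the rather intricate admissible ranges for $\theta,\alpha,\kappa,\gamma,\eta,\mu$ and the ultimate constraint $\vartheta<\min\{n/(Ct),\theta\}$.
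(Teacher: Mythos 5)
Your proposal follows essentially the same route as the paper: the same stopping process $S_t$, the same splitting of each force difference into a good-set Lipschitz/consistency part plus a law-of-large-numbers fluctuation part (the bad sets $\mathcal{N}_\kappa,\mathcal{N}_\gamma,\mathcal{N}_\eta,\mathcal{N}_\mu$), the same $\delta$-lower bound on the denominators producing the $\exp\{C\vartheta\ln(N)\,t\}$ factor, and the same Gronwall-plus-Markov conclusion. The only deviation is cosmetic: the paper controls the fluctuation sets by a fourth-moment Markov inequality (which is exactly what produces the exponents $1-\cdots-4\kappa-\alpha$, etc., in $n$) rather than Bernstein/Hoeffding, but either concentration tool suffices for the stated bound.
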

First, let's define some sets as follows.
\begin{definition}\label{def3.2}
The sets $\mathcal{N}_\kappa,\,\mathcal{N}_\gamma,\,\mathcal{N}_\eta$ and $\mathcal{N}_\mu$ are characterized by
\begin{align}\label{def3.2-1}
\mathcal{N}_\kappa:=\Big\{(X,V):\Big|\Psi^{\varepsilon,\delta}(\overline{X}_t^{\varepsilon,\delta})
-\overline{\Psi}^{\varepsilon,\delta}(\overline{X}_t^{\varepsilon,\delta})\Big|_\infty>N^{-\kappa}\Big\},
\end{align}
\begin{align}\label{def3.2-2}
\mathcal{N}_\gamma:=\Big\{(X,V):\Big|Q^{\varepsilon,\delta}(\overline{X}_t^{\varepsilon,\delta})
-\overline{Q}^{\varepsilon,\delta}(\overline{X}_t^{\varepsilon,\delta})\Big|_\infty>N^{-\gamma}\Big\},
\end{align}
\begin{align}\label{def3.2-3}
\mathcal{N}_\eta:=\Big\{(X,V):\Big|\Phi^{\varepsilon,\delta}(\overline{X}_t^{\varepsilon,\delta})
-\overline{\Phi}^{\varepsilon,\delta}(\overline{X}_t^{\varepsilon,\delta})\Big|_\infty>N^{-\eta}\Big\},
\end{align}
\begin{align}\label{3.2-4}
\mathcal{N}_\mu:=\Big\{(X,V):\Big|P^{\varepsilon,\delta}(\overline{X}_t^{\varepsilon,\delta})
-\overline{P}^{\varepsilon,\delta}(\overline{X}_t^{\varepsilon,\delta})\Big|_\infty>N^{-\mu}\Big\},
\end{align}
where $Q^{\varepsilon,\delta}(\overline{X}_t^{\varepsilon,\delta})$ and $\overline{Q}^{\varepsilon,\delta}(\overline{X}_t^{\varepsilon,\delta})$ are understood in the sense of
$$
(Q^{\varepsilon,\delta}(\overline{X}_t^{\varepsilon,\delta}))_i:=-\frac{\lambda}{N}
\sum_{j=1}^Nq^\varepsilon(\overline{x}_i^{\varepsilon,\delta}-\overline{x}_j^{\varepsilon,\delta})
$$
and correspondingly
$$
(\overline{Q}^{\varepsilon,\delta}(\overline{X}_t^{\varepsilon,\delta}))_i:=-\lambda\int_{\mathbb{R}^d} q^\varepsilon(\overline{x}_i^{\varepsilon,\delta}-y)\rho^{\varepsilon,\delta}(t,y)dy.
$$
Similarly $P^{\varepsilon,\delta}(\overline{X}_t^{\varepsilon,\delta})$ and $\overline{P}^{\varepsilon,\delta}(\overline{X}_t^{\varepsilon,\delta})$ are understood in the sense of
$$
(P^{\varepsilon,\delta}(\overline{X}_t^{\varepsilon,\delta}))_i:=\frac{1}{N}
\sum_{j=1}^N|(\phi^{\varepsilon})'(\overline{x}_i^{\varepsilon,\delta}-\overline{x}_j^{\varepsilon,\delta})|
$$
and correspondingly
$$
(\overline{P}^{\varepsilon,\delta}(\overline{X}_t^{\varepsilon,\delta}))_i:=\int_{\mathbb{R}^d}|(\phi^{\varepsilon})'(\overline{x}_i^{\varepsilon,\delta}-y)|\rho^{\varepsilon,\delta}(t,y)dy.
$$
\end{definition}
Next, we will prove that the probability of the sets $\mathcal{N}_\kappa$, $\mathcal{N}_\gamma$, $\mathcal{N}_\eta$ and $\mathcal{N}_\mu$ tend to $0$ as $N$ goes to infinity by the following Lemmas \ref{lm3.1}-\ref{lm3.3'}.
\begin{lemma}\label{lm3.1}
There exists a constant $C>0$ such that
$$
\mathbb{P}_0(\mathcal{N}_\kappa)\le \lambda^4C\varepsilon^{-4(d-1)}N^{-(1-4\kappa)}.
$$
\end{lemma}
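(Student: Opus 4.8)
\medskip

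\noindent\emph{Proof plan.} The plan is to read $(\Psi^{\varepsilon,\delta}(\overline X_t^{\varepsilon,\delta}))_i$ as an empirical average over the i.i.d.\ copies $(\overline x_j^{\varepsilon,\delta},\overline v_j^{\varepsilon,\delta})$, whose conditional mean given the $i$-th particle is exactly $(\overline\Psi^{\varepsilon,\delta}(\overline X_t^{\varepsilon,\delta}))_i$, and then to run a fourth-moment (Markov) estimate followed by a union bound over $i=1,\dots,N$. The uniform bound $|\nabla_x W^\varepsilon|\le C\varepsilon^{-(d-1)}$ from Lemma~\ref{lm2.1}(i) is what produces the factor $\varepsilon^{-4(d-1)}$, and the extra power $N^{-2}$ gained from a fourth-moment (rather than second-moment) expansion is what makes the exponent $1-4\kappa$ work out.

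Concretely, I would fix $i$ and put, for $j=1,\dots,N$,
$$
\xi_j:=\nabla_x W^\varepsilon(\overline x_i^{\varepsilon,\delta}-\overline x_j^{\varepsilon,\delta})-\int_{\Rd}\nabla_x W^\varepsilon(\overline x_i^{\varepsilon,\delta}-y)\,\rho^{\varepsilon,\delta}(t,y)\,dy,
$$
so that $(\Psi^{\varepsilon,\delta}(\overline X_t^{\varepsilon,\delta}))_i-(\overline\Psi^{\varepsilon,\delta}(\overline X_t^{\varepsilon,\delta}))_i=-\tfrac{\lambda}{N}\sum_{j=1}^N\xi_j$. Since the $(\overline x_j^{\varepsilon,\delta},\overline v_j^{\varepsilon,\delta})$ are independent with law $f^{\varepsilon,\delta}(t,\cdot,\cdot)$, conditioning on $\overline x_i^{\varepsilon,\delta}$ makes $\{\xi_j\}_{j\ne i}$ i.i.d., centered, and bounded by $2C\varepsilon^{-(d-1)}$ by Lemma~\ref{lm2.1}(i). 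The single diagonal term $j=i$ is conditionally deterministic and, since $\nabla_x W^\varepsilon(0)=0$, equals $-\int_{\Rd}\nabla_x W^\varepsilon(\overline x_i^{\varepsilon,\delta}-y)\rho^{\varepsilon,\delta}(t,y)\,dy$, hence contributes at most $\lambda C\varepsilon^{-(d-1)}/N$ to $|(\Psi^{\varepsilon,\delta})_i-(\overline\Psi^{\varepsilon,\delta})_i|$ and is harmless.

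Next I would expand the conditional fourth moment $\mathbb{E}\big[\,|\tfrac1N\sum_{j\ne i}\xi_j|^4\ \big|\ \overline x_i^{\varepsilon,\delta}\,\big]$ component by component: by independence and centering, only terms in which every index occurs at least twice survive, i.e.\ the $O(N)$ fully diagonal terms and the $O(N^2)$ terms splitting into two coincident pairs, each of size $\le (2C\varepsilon^{-(d-1)})^4$; hence this conditional fourth moment is $\le C\varepsilon^{-4(d-1)}N^{-2}$ (the $d$-dependent factor from passing between $|\cdot|$ and components being absorbed into $C$). Taking expectations and adding the diagonal remainder gives $\mathbb{E}_0\big[\,|(\Psi^{\varepsilon,\delta})_i-(\overline\Psi^{\varepsilon,\delta})_i|^4\,\big]\le C\lambda^4\varepsilon^{-4(d-1)}N^{-2}$. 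Finally, using that $|\cdot|_\infty\le|\cdot|$ on $\R^d$, the union bound over $i$, and Markov's inequality with the fourth power,
$$
\mathbb{P}_0(\mathcal{N}_\kappa)\le\sum_{i=1}^N\mathbb{P}_0\Big(\big|(\Psi^{\varepsilon,\delta})_i-(\overline\Psi^{\varepsilon,\delta})_i\big|>N^{-\kappa}\Big)\le N^{1+4\kappa}\max_{1\le i\le N}\mathbb{E}_0\big[\,|(\Psi^{\varepsilon,\delta})_i-(\overline\Psi^{\varepsilon,\delta})_i|^4\,\big]\le \lambda^4 C\,\varepsilon^{-4(d-1)}N^{-(1-4\kappa)},
$$
which is the claim.

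This is essentially a quantitative law of large numbers, so I do not expect a genuine obstacle; the only points that need care are (a) conditioning on the $i$-th particle to restore the independent, mean-zero structure of the summands, since $\overline x_i^{\varepsilon,\delta}$ appears in both $\Psi^{\varepsilon,\delta}$ and $\overline\Psi^{\varepsilon,\delta}$, and (b) the bookkeeping in the fourth-moment expansion that yields the decay $N^{-2}$ (which is in fact stronger than what the stated bound requires).
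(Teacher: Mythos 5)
Your proposal is correct and follows essentially the same route as the paper: a fourth-moment Markov bound on $\frac{\lambda}{N}\sum_j h_j$ with $h_j$ the centered summands, using the vanishing of all expansion terms containing a lone index, the uniform bound $|\nabla_x W^\varepsilon|\le C\varepsilon^{-(d-1)}$ to control the $O(N)$ quartic and $O(N^2)$ paired terms, and a union bound over $i$. Your explicit treatment of the diagonal term $j=i$ and of the conditioning on $\overline{x}_i^{\varepsilon,\delta}$ is a small refinement of details the paper leaves implicit, not a different argument.
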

\begin{proof}
First, we let the set $\mathcal{N}_\kappa$ evolve along the characteristics of the kinetic equation
$$
\mathcal{N}_{\kappa,t}:=\Big\{(\overline{X}_t^{\varepsilon,\delta},\overline{V}_t^{\varepsilon,\delta}):
\Big|N^\kappa\Psi^{\varepsilon,\delta}(\overline{X}_t^{\varepsilon,\delta})-N^\kappa\overline{\Psi}^{\varepsilon,\delta}(\overline{X}_t^{\varepsilon,\delta})\Big|_\infty>1\Big\}
$$
and consider the following fact
$$
\mathcal{N}_{\kappa,t}\subseteq\bigoplus_{i=1}^N\mathcal{N}_{\kappa,t}^i,
$$
where
$$
\mathcal{N}_{\kappa,t}^i:=\Big\{(\overline{x}_i^{\varepsilon,\delta},\overline{v}_i^{\varepsilon,\delta}):
\Big|N^\kappa\cdot\frac{\lambda}{N}\sum_{j=1}^N
\nabla_x W^\varepsilon(\overline{x}_i^{\varepsilon,\delta}-\overline{x}_j^{\varepsilon,\delta})
-\lambda N^\kappa(\nabla_x W^\varepsilon\ast\rho^{\varepsilon,\delta})(t,\overline{x}_i^{\varepsilon,\delta})\Big|_\infty>1\Big\}.
$$
So, using the symmetry argument in exchanging any two coordinates, we can get
$$
\mathbb{P}_t(\mathcal{N}_{\kappa,t})\le\sum_{i=1}^N\mathbb{P}_t(\mathcal{N}_{\kappa,t}^i)=N\mathbb{P}_t(\mathcal{N}_{\kappa,t}^1).
$$
Using Markov inequality gives
\begin{align}\label{lm3.1-1'}
\mathbb{P}_t(\mathcal{N}_{\kappa,t}^1)
&\le \mathbb{E}_t\Big[\Big(N^\kappa\cdot\frac{\lambda}{N}\sum_{j=1}^N
\nabla_x W^\varepsilon(\overline{x}_1^{\varepsilon,\delta}-\overline{x}_j^{\varepsilon,\delta})
-\lambda N^\kappa(\nabla_x W^\varepsilon\ast\rho^{\varepsilon,\delta})(t,\overline{x}_1^{\varepsilon,\delta})\Big)^4\Big]\nonumber\\
&=\Big(\frac{\lambda N^\kappa}{N}\Big)^4
\mathbb{E}_t\Big[\Big(\sum_{j=1}^N
\nabla_x W^\varepsilon(\overline{x}_1^{\varepsilon,\delta}-\overline{x}_j^{\varepsilon,\delta})
-N(\nabla_x W^\varepsilon\ast\rho^{\varepsilon,\delta})(t,\overline{x}_1^{\varepsilon,\delta})\Big)^4\Big].
\end{align}
Let $h_j:=\nabla_x W^\varepsilon(\overline{x}_1^{\varepsilon,\delta}-\overline{x}_j^{\varepsilon,\delta})-\int_{\mathbb{R}^d}\nabla_x W^\varepsilon(\overline{x}_1^{\varepsilon,\delta}-y)\rho^{\varepsilon,\delta}(t,y)dy$. Then, the each term in the expectation \eqref{lm3.1-1'} takes the form of $\prod\limits_{j=1}^Nh_j^{k_j}$ with $\sum\limits_{j=1}^Nk_j=4$, and the expectation assumes the value of zero whenever there exists a $j$ such that $k_j=1$, i.e.,
$$
\mathbb{E}_t\Big[\nabla_x W^\varepsilon(\overline{x}_1^{\varepsilon,\delta}-\overline{x}_j^{\varepsilon,\delta})-\int_{\mathbb{R}^d}\nabla_x W^\varepsilon(\overline{x}_1^{\varepsilon,\delta}-y)\rho^{\varepsilon,\delta}(t,y)dy\Big]=0.
$$
Then, we simplify the estimate \eqref{lm3.1-1'} to
$$
\mathbb{P}_t(\mathcal{N}_{\kappa,t}^1)
\le\Big(\frac{\lambda N^\kappa}{N}\Big)^4
\mathbb{E}_t\Big[\sum_{j=1}^Nh_j^4+\sum_{1\le m<n}^NC_4^2h_m^2h_n^2\Big].
$$
Since $\nabla_x W^\varepsilon$ and $\|\rho^{\varepsilon,\delta}\|_{L^1}$ are bounded, thus for any fixed $j$
$$
|h_j|\le\Big|\nabla_xW^{\varepsilon}(\overline{x}_1^{\varepsilon,\delta}-\overline{x}_j^{\varepsilon,\delta})\Big|
+\int_{\mathbb{R}^d}\Big|\nabla_xW^{\varepsilon}(\overline{x}_1^{\varepsilon,\delta}-y)\Big|\rho^{\varepsilon,\delta}(t,y)dy
\le C\varepsilon^{-(d-1)}.
$$
Therefore $|h_j|$ is bounded to any power and we get
$$
\mathbb{E}_t\Big[h_m^2h_n^2\Big]\le C\varepsilon^{-4(d-1)}\quad \quad
\mathbb{E}_t\Big[h_j^4\Big]\le C\varepsilon^{-4(d-1)},
$$
and consequently
\begin{align*}
\mathbb{P}_t(\mathcal{N}_{\kappa,t}^1)
&\le C\varepsilon^{-4(d-1)}\Big(\frac{\lambda N^\kappa}{N}\Big)^4
\Big(N+\frac{N(N-1)}{2}\Big)
\le \lambda^4C\varepsilon^{-4(d-1)}N^{-(2-4\kappa)}.
\end{align*}
Then, we obtain
$$
\mathbb{P}_0(\mathcal{N}_\kappa)
=\mathbb{P}_t(\mathcal{N}_{\kappa,t})
\le N\mathbb{P}_t(\mathcal{N}_{\kappa,t}^1)
\le N\lambda^4C\varepsilon^{-4(d-1)}N^{-(2-4\kappa)}=\lambda^4C\varepsilon^{-4(d-1)}N^{-(1-4\kappa)}.
$$
\end{proof}
\begin{lemma}\label{lm3.2}
There exists a constant $C>0$ such that
$$
\mathbb{P}_0(\mathcal{N}_\gamma)\le \lambda^4C\varepsilon^{-2d}N^{-(1-4\gamma)}.
$$
\end{lemma}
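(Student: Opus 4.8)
The plan is to mirror the fourth–moment (law of large numbers) argument used for Lemma \ref{lm3.1}, with the kernel $\nabla_x W^\varepsilon$ replaced by $q^\varepsilon$. First I would transport $\mathcal{N}_\gamma$ along the characteristic flow \eqref{def2.2-1}, setting
$$
\mathcal{N}_{\gamma,t}:=\Big\{(\overline X_t^{\varepsilon,\delta},\overline V_t^{\varepsilon,\delta}):\Big|N^\gamma Q^{\varepsilon,\delta}(\overline X_t^{\varepsilon,\delta})-N^\gamma\overline Q^{\varepsilon,\delta}(\overline X_t^{\varepsilon,\delta})\Big|_\infty>1\Big\},
$$
and note that, since $f^{\varepsilon,\delta}(t,\cdot,\cdot)$ is the common law of the i.i.d.\ copies $(\overline x_i^{\varepsilon,\delta},\overline v_i^{\varepsilon,\delta})$, the flow transports the measure, so $\mathbb{P}_0(\mathcal{N}_\gamma)=\mathbb{P}_t(\mathcal{N}_{\gamma,t})$. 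Decomposing $\mathcal{N}_{\gamma,t}\subseteq\bigcup_{i=1}^N\mathcal{N}_{\gamma,t}^i$ into single-particle events and using exchangeability of the coordinates reduces the estimate to bounding $N\,\mathbb{P}_t(\mathcal{N}_{\gamma,t}^1)$, and Markov's inequality with the fourth power turns this into estimating $(\lambda N^\gamma/N)^4\,\mathbb{E}_t\big[(\sum_{j=1}^N h_j)^4\big]$, where
$$
h_j:=q^\varepsilon(\overline x_1^{\varepsilon,\delta}-\overline x_j^{\varepsilon,\delta})-\int_{\R^d}q^\varepsilon(\overline x_1^{\varepsilon,\delta}-y)\rho^{\varepsilon,\delta}(t,y)\,dy.
$$
Conditioning on $\overline x_1^{\varepsilon,\delta}$, the $h_j$ with $j\neq1$ are centered and conditionally independent, so expanding the fourth power leaves only the $N$ diagonal terms $\mathbb{E}_t[h_j^4]$ and the $\binom N2$ pair terms $\mathbb{E}_t[h_m^2h_n^2]$; the self-term $j=1$ is bounded by $C\varepsilon^{-d}$ and contributes to lower order, exactly as in Lemma \ref{lm3.1}.

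The step I expect to be the crux is the second-moment bound for $h_j$: bounding $|q^\varepsilon|$ by $C\varepsilon^{-d}$ everywhere would only produce $\varepsilon^{-4d}$, too lossy to reach the claimed $\varepsilon^{-2d}$. The gain comes from controlling the variance through the $L^1$-type bound of Assumption \ref{assum}(iii). Indeed, directly from the definition of $q^\varepsilon$ one has both $q^\varepsilon(z)\le C\varepsilon^{-d}$ and $q^\varepsilon(z)\le C|z|^{-d}$ for every $z\neq0$, hence $q^\varepsilon(z)^2\le C\varepsilon^{-d}|z|^{-d}$, and therefore
$$
\mathbb{E}_t[h_j^2]\le\int_{\R^d}q^\varepsilon(\overline x_1^{\varepsilon,\delta}-y)^2\rho^{\varepsilon,\delta}(t,y)\,dy\le C\varepsilon^{-d}\Big\|\int_{\R^{2d}}\frac{1}{|x-y|^d}f^{\varepsilon,\delta}(t,y,v)\,dy\,dv\Big\|_\infty\le C\varepsilon^{-d}.
$$
Combined with $|h_j|\le C\varepsilon^{-d}$ this yields $\mathbb{E}_t[h_j^4]\le C\varepsilon^{-2d}\mathbb{E}_t[h_j^2]\le C\varepsilon^{-3d}$ and, by conditional independence, $\mathbb{E}_t[h_m^2h_n^2]\le C\varepsilon^{-2d}$.

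Assembling the pieces gives
$$
\mathbb{P}_t(\mathcal{N}_{\gamma,t}^1)\le C\Big(\frac{\lambda N^\gamma}{N}\Big)^4\Big(N\varepsilon^{-3d}+\frac{N(N-1)}{2}\,\varepsilon^{-2d}\Big),
$$
and since $\varepsilon^{-d}=N^{d\theta}$ with $d\theta<1$ (because $\theta<\frac{1}{9d+2}$), the term $N^{-2}\varepsilon^{-2d}$ dominates $N^{-3}\varepsilon^{-3d}$, so $\mathbb{P}_t(\mathcal{N}_{\gamma,t}^1)\le\lambda^4C\varepsilon^{-2d}N^{-(2-4\gamma)}$. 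Multiplying by $N$ and recalling $\mathbb{P}_0(\mathcal{N}_\gamma)=\mathbb{P}_t(\mathcal{N}_{\gamma,t})\le N\,\mathbb{P}_t(\mathcal{N}_{\gamma,t}^1)$ produces the asserted bound $\mathbb{P}_0(\mathcal{N}_\gamma)\le\lambda^4C\varepsilon^{-2d}N^{-(1-4\gamma)}$. The only points needing care beyond the template of Lemma \ref{lm3.1} are the variance estimate above and the verification that the diagonal $\varepsilon^{-3d}$ contribution is genuinely of lower order.
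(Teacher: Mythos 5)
Your proposal is correct and follows essentially the same fourth-moment argument as the paper: the transport of $\mathcal{N}_\gamma$ along the flow, the union bound with exchangeability, Markov's inequality with the fourth power, and the key variance bound $\mathbb{E}_t[h_j^2]\le C\varepsilon^{-d}$ via Assumption \ref{assum}(iii) are all identical in substance (your one-line interpolation $q^\varepsilon(z)^2\le C\varepsilon^{-d}|z|^{-d}$ is exactly the paper's two-region splitting of the integral). Your explicit verification that the diagonal $\varepsilon^{-3d}N$ term is dominated by $\varepsilon^{-2d}N^2$ using $d\theta<1$ is a small point the paper leaves implicit, but nothing differs in approach.
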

\begin{proof}
Let the set $\mathcal{N}_\gamma$ evolve along the characteristics of the kinetic equation
$$
\mathcal{N}_{\gamma,t}:=\Big\{(\overline{X}_t^{\varepsilon,\delta},\overline{V}_t^{\varepsilon,\delta}):
\Big|N^\gamma Q^{\varepsilon,\delta}(\overline{X}_t^{\varepsilon,\delta},\overline{V}_t^{\varepsilon,\delta})-N^\gamma\overline{Q}^{\varepsilon,\delta}(\overline{X}_t^{\varepsilon,\delta},\overline{V}_t^{\varepsilon,\delta})\Big|_\infty>1\Big\}
$$
and consider the following fact
$$
\mathcal{N}_{\gamma,t}\subseteq\bigoplus_{i=1}^N\mathcal{N}_{\gamma,t}^i,
$$
where
$$
\mathcal{N}_{\gamma,t}^i
:=\Big\{(\overline{x}_i^{\varepsilon,\delta},\overline{v}_i^{\varepsilon,\delta}):\Big|N^\gamma\cdot\frac{\lambda}{N}
\sum_{j=1}^N
q^\varepsilon(\overline{x}_i^{\varepsilon,\delta}-\overline{x}_j^{\varepsilon,\delta})
-\lambda N^\gamma(q^{\varepsilon}\ast\rho^{\varepsilon,\delta})(t,\overline{x}_i^{\varepsilon,\delta})\Big|_\infty>1\Big\}.
$$
Due to the symmetry in exchanging any two coordinates, we get
$$
\mathbb{P}_t(\mathcal{N}_{\gamma,t})\le\sum_{i=1}^N\mathbb{P}_t(\mathcal{N}_{\gamma,t}^i)=N\mathbb{P}_t(\mathcal{N}_{\gamma,t}^1).
$$
Using Markov inequality gives
\begin{align}\label{lm3.2-1}
\mathbb{P}_t(\mathcal{N}_{\gamma,t}^i)
&\le \mathbb{E}_t\Big[\Big(N^\gamma\cdot\frac{\lambda}{N}
\sum_{j=1}^N
q^{\varepsilon}(\overline{x}_1^{\varepsilon,\delta}-\overline{x}_j^{\varepsilon,\delta})
-\lambda N^\gamma(q^{\varepsilon}\ast\rho^{\varepsilon,\delta})(t,\overline{x}_1^{\varepsilon,\delta})\Big)^4\Big]\nonumber\\
&=\Big(\frac{\lambda N^\gamma}{N}\Big)^4
\mathbb{E}_t\Big[\Big(
\sum_{j=1}^N
q^{\varepsilon}(\overline{x}_1^{\varepsilon,\delta}-\overline{x}_j^{\varepsilon,\delta})
-N(q^{\varepsilon}\ast\rho^{\varepsilon,\delta})(t,\overline{x}_1^{\varepsilon,\delta})\Big)^4\Big].
\end{align}
Similar to Lemma \ref{lm3.1}, we define $h_j:=q^{\varepsilon}(\overline{x}_1^{\varepsilon,\delta}-\overline{x}_j^{\varepsilon,\delta})
-\int_{\mathbb{R}^d} q^{\varepsilon}(\overline{x}_1^{\varepsilon,\delta}-y)
\rho^{\varepsilon,\delta}(t,y)dy$. With the same argument as in Lemma \ref{lm3.1}, we get
$$
\mathbb{P}_t(\mathcal{N}_{\gamma,t}^1)
\le\Big(\frac{\lambda N^\gamma}{N}\Big)^4
\mathbb{E}_t\Big[\sum_{j=1}^Nh_j^4+\sum_{1\le m<n}^N6h_m^2h_n^2\Big].
$$
On the other hand, due to the cut-off, we have
$$
\|q^{\varepsilon}\|_\infty\le C\varepsilon^{-d}.
$$
Using the $L^\infty$-norm of $q^{\varepsilon}$ and the integrability of $\rho^{\varepsilon,\delta}$, we obtain
$$
\Big|\int_{\mathbb{R}^d} q^{\varepsilon}(\overline{x}_1^{\varepsilon,\delta}-y)
\rho^{\varepsilon,\delta}(t,y)dy\Big|\le C\varepsilon^{-d}.
$$
Then
$|h_j|\le C\varepsilon^{-d}$.
Furthermore, by applying the inequality $\mathbb{E}\Big[(X-\mathbb{E}[X])^2\Big]\le\mathbb{E}[X^2]$ for any random variable $X$, we have for any fixed $j$
\begin{align*}
\mathbb{E}_t[h_j^2]&=\mathbb{E}_t\Big[\Big(q^{\varepsilon}(\overline{x}_1^{\varepsilon,\delta}-\overline{x}_j^{\varepsilon,\delta})
-\int_{\mathbb{R}^d} q^{\varepsilon}(\overline{x}_1^{\varepsilon,\delta}-y)
\rho^{\varepsilon,\delta}(t,y)dy\Big)^2\Big]\\
&\le\mathbb{E}_t\Big[\Big(q^{\varepsilon}(\overline{x}_1^{\varepsilon,\delta}-\overline{x}_j^{\varepsilon,\delta})\Big)^2\Big]\\
&=\int_{\R^{2d}}\Big(\int_{\R^{2d}}(q^{\varepsilon}(x-y))^2f^{\varepsilon,\delta}(t,y,v)dydv\Big)f^{\varepsilon,\delta}(t,x,w)dxdw.
\end{align*}
Notice that
\begin{align}\label{3.2-5}
\int_{\R^{2d}}(q^{\varepsilon}(x-y))^2f^{\varepsilon,\delta}(t,y,v)dydv
&\le\int_{\{|x-y|<3\varepsilon\}\times\Rd}(C\varepsilon^{-d})^2f^{\varepsilon,\delta}(t,y,v)dydv\nonumber\\
&~~~+\int_{\{|x-y|\ge3\varepsilon\}\times\Rd}(C\frac{1}{|x-y|^d})^2f^{\varepsilon,\delta}(t,y,v)dydv\nonumber\\
&=:I_1+I_2.
\end{align}
Applying $(iii)$ of  Assumption \ref{assum}, we get that
\begin{align*}
I_1&=C\varepsilon^{-d}\int_{\{|x-y|<3\varepsilon\}\times\Rd}\varepsilon^{-d}f^{\varepsilon,\delta}(t,y,v)dydv\\
&\le C\varepsilon^{-d}\int_{\{|x-y|<3\varepsilon\}\times\Rd}\frac{1}{|x-y|^d}f^{\varepsilon,\delta}(t,y,v)dydv,
\end{align*}
and
\begin{align*}
I_2&=C\int_{\{|x-y|\ge3\varepsilon\}\times\Rd}\frac{1}{|x-y|^d}\frac{1}{|x-y|^d}f^{\varepsilon,\delta}(t,y,v)dydv\\
&\le C\varepsilon^{-d}\int_{\{|x-y|\ge3\varepsilon\}\times\Rd}\frac{1}{|x-y|^d}f^{\varepsilon,\delta}(t,y,v)dydv.
\end{align*}
Taking the above two term into \eqref{3.2-5}, we get
\begin{align*}
\int_{\R^{2d}}(q^{\varepsilon}(x-y))^2f^{\varepsilon,\delta}(t,y,v)dydv
&\le C\varepsilon^{-d}\int_{\R^{2d}}\frac{1}{|x-y|^d}f^{\varepsilon,\delta}(t,y,v)dydv
\le C\varepsilon^{-d}.
\end{align*}
So, we obtain
\begin{align*}
\mathbb{E}_t[h_j^2]\le C\varepsilon^{-d}\int_{\R^{2d}}f^{\varepsilon,\delta}(t,x,w)dxdw
\le C\varepsilon^{-d}.
\end{align*}
Therefore for any fixed $j$
$$
\mathbb{E}_t[h_j^4]\le\|h_j\|_\infty^2\mathbb{E}_t[h_j^2]\le C\varepsilon^{-2d}\cdot\frac{2\pi}{d}\varepsilon^{-d}
=C\varepsilon^{-3d}\quad\quad\quad\mathbb{E}_t[h_m^2h_n^2]\le C\varepsilon^{-2d}.
$$
Consequently
$$
\mathbb{P}_t(\mathcal{N}_{\gamma,t}^1)
\le C\Big(\frac{\lambda N^\gamma}{N}\Big)^4
\Big(\varepsilon^{-3d}N+\varepsilon^{-2d}\frac{N(N-1)}{2}\Big)
\le \lambda^4C\varepsilon^{-2d}N^{-(2-4\gamma)}.
$$
Then
$$
\mathbb{P}_0(\mathcal{N}_\gamma)
=\mathbb{P}_t(\mathcal{N}_{\gamma,t})
\le N\mathbb{P}_t(\mathcal{N}_{\gamma,t}^1)
\le N\lambda^4C\varepsilon^{-2d}N^{-(2-4\gamma)}=\lambda^4C\varepsilon^{-2d}N^{-(1-4\gamma)}.
$$
\end{proof}
\begin{lemma}\label{lm3.3}
There exists a constant $C>0$ such that
$$
\mathbb{P}_0(\mathcal{N}_\eta)\le C\beta^4\varepsilon^{-4d}N^{-(1-4\eta)}(R^4+\delta^{-4}).
$$
\end{lemma}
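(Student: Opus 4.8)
The plan is to follow the same template as the proofs of Lemmas \ref{lm3.1} and \ref{lm3.2}. First I would transport the event $\mathcal{N}_\eta$ along the characteristics of the kinetic equation \eqref{0.5}, introducing $\mathcal{N}_{\eta,t}:=\{(\overline{X}_t^{\varepsilon,\delta},\overline{V}_t^{\varepsilon,\delta}):|N^\eta\Phi^{\varepsilon,\delta}(\overline{X}_t^{\varepsilon,\delta})-N^\eta\overline{\Phi}^{\varepsilon,\delta}(\overline{X}_t^{\varepsilon,\delta})|_\infty>1\}$ together with the one-particle events $\mathcal{N}_{\eta,t}^i:=\{(\overline{x}_i^{\varepsilon,\delta},\overline{v}_i^{\varepsilon,\delta}):\beta|u^{\varepsilon,\delta}(\overline{x}_i^{\varepsilon,\delta})-\overline{u}^{\varepsilon,\delta}(\overline{x}_i^{\varepsilon,\delta})|>N^{-\eta}\}$, so that $\mathcal{N}_{\eta,t}\subseteq\bigoplus_{i=1}^N\mathcal{N}_{\eta,t}^i$. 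By the exchangeability of the copies $(\overline{x}_i^{\varepsilon,\delta},\overline{v}_i^{\varepsilon,\delta})$ one has $\mathbb{P}_t(\mathcal{N}_{\eta,t})\le N\mathbb{P}_t(\mathcal{N}_{\eta,t}^1)$, and since $\mathbb{P}_0(\mathcal{N}_\eta)=\mathbb{P}_t(\mathcal{N}_{\eta,t})$, Markov's inequality at the fourth moment reduces the claim to a bound of the form $\mathbb{E}_t[\beta^4|u^{\varepsilon,\delta}(\overline{x}_1^{\varepsilon,\delta})-\overline{u}^{\varepsilon,\delta}(\overline{x}_1^{\varepsilon,\delta})|^4]\le C\beta^4\varepsilon^{-4d}(R^4+\delta^{-4})N^{-2}$.

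The new difficulty, absent from Lemmas \ref{lm3.1}--\ref{lm3.2}, is that $u^{\varepsilon,\delta}$ in \eqref{local} is a quotient rather than an empirical average, so the centering argument does not apply directly; this is exactly the vacuum issue flagged in the introduction. I would write the numerator and denominator of $u^{\varepsilon,\delta}(\overline{x}_1^{\varepsilon,\delta})$ as $A_N=\frac1N\sum_{j=1}^N\phi_R(\overline{v}_j^{\varepsilon,\delta})\phi^\varepsilon(\overline{x}_1^{\varepsilon,\delta}-\overline{x}_j^{\varepsilon,\delta})$ and $B_N=\frac1N\sum_{j=1}^N\phi^\varepsilon(\overline{x}_1^{\varepsilon,\delta}-\overline{x}_j^{\varepsilon,\delta})$, and denote by $\overline{A},\overline{B}$ the corresponding integrals against $f^{\varepsilon,\delta}$, so that $u^{\varepsilon,\delta}(\overline{x}_1^{\varepsilon,\delta})=A_N/(B_N+\delta)$ and $\overline{u}^{\varepsilon,\delta}(\overline{x}_1^{\varepsilon,\delta})=\overline{A}/(\overline{B}+\delta)$. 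The elementary identity $A_N/(B_N+\delta)-\overline{A}/(\overline{B}+\delta)=(A_N-\overline{A})/(B_N+\delta)-[\overline{A}/(\overline{B}+\delta)]\cdot(B_N-\overline{B})/(B_N+\delta)$, together with the deterministic lower bounds $B_N+\delta\ge\delta$ and $\overline{B}+\delta\ge\delta$ and the bound $|\phi_R(v)|\le 2R$ (which yields $|\overline{A}|\le 2R(\overline{B}+\delta)$, hence $|\overline{A}|/(\overline{B}+\delta)\le 2R$), reduces the estimate of $|u^{\varepsilon,\delta}(\overline{x}_1^{\varepsilon,\delta})-\overline{u}^{\varepsilon,\delta}(\overline{x}_1^{\varepsilon,\delta})|$ to the two genuine fluctuations $|A_N-\overline{A}|$ and $|B_N-\overline{B}|$, up to multiplicative constants and negative powers of $\delta$ and $R$.

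Now $A_N-\overline{A}$ and $B_N-\overline{B}$ are centered empirical averages of random variables that, conditionally on $\overline{x}_1^{\varepsilon,\delta}$, are i.i.d. and bounded: $\|\phi_R\phi^\varepsilon\|_\infty\le CR\varepsilon^{-d}$ for the first and $\|\phi^\varepsilon\|_\infty\le C\varepsilon^{-d}$ for the second. Expanding the fourth power exactly as in the proof of Lemma \ref{lm3.1} --- only squares and fourth powers of the centered variables survive under $\mathbb{E}_t$, while the $j=1$ self-interaction term contributes only at lower order in $N$ --- gives $\mathbb{E}_t|A_N-\overline{A}|^4\le CR^4\varepsilon^{-4d}N^{-2}$ and $\mathbb{E}_t|B_N-\overline{B}|^4\le C\varepsilon^{-4d}N^{-2}$. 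Collecting these two estimates with the $\delta$- and $R$-dependent prefactors produced by the splitting leads to $\mathbb{E}_t[|u^{\varepsilon,\delta}(\overline{x}_1^{\varepsilon,\delta})-\overline{u}^{\varepsilon,\delta}(\overline{x}_1^{\varepsilon,\delta})|^4]\le C\varepsilon^{-4d}(R^4+\delta^{-4})N^{-2}$, whence $\mathbb{P}_0(\mathcal{N}_\eta)\le N\cdot N^{4\eta}\cdot C\beta^4\varepsilon^{-4d}(R^4+\delta^{-4})N^{-2}=C\beta^4\varepsilon^{-4d}N^{-(1-4\eta)}(R^4+\delta^{-4})$, which is the assertion.

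I expect the only real obstacle to be the handling of the quotient: essentially all of the work is in the second paragraph, where one must ensure that dividing by the random denominator $B_N+\delta$ does not destroy the fluctuation estimate. It is precisely the regularizing constant $\delta$ added in \eqref{local} that makes this possible, since it supplies the deterministic lower bound $B_N+\delta\ge\delta>0$ irrespective of whether $\rho^{\varepsilon,\delta}$ has a vacuum; the price is the polynomial factor $R^4+\delta^{-4}$, which, under the scaling $\delta=R^{-1}=(\vartheta\ln N)^{-1/2}$, is only polylogarithmic in $N$ and will ultimately be absorbed into the exponential factor appearing in Theorem \ref{th3.1}.
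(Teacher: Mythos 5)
Your proposal follows essentially the same route as the paper: transport of the event along the characteristics, reduction to one particle by exchangeability, fourth-moment Markov, the same splitting of the quotient into a numerator fluctuation over the (deterministically $\geq\delta$) denominator plus the limiting quotient times a denominator fluctuation, and the same centered fourth-moment expansion yielding $CR^4\varepsilon^{-4d}N^{-2}$ and $C\varepsilon^{-4d}N^{-2}$. The only wrinkle — shared with the paper — is bookkeeping in the last step: the splitting honestly produces an extra $\delta^{-4}$ prefactor (the paper's intermediate bound is $C\beta^4\varepsilon^{-4d}\delta^{-4}N^{-(2-4\eta)}(R^4+\delta^{-4})$ and yours is $C\beta^4\varepsilon^{-4d}R^4\delta^{-4}N^{-2}$, since you bound $|\overline{A}|/(\overline{B}+\delta)\le 2R$ where the paper uses Assumption (i)), which silently disappears in the stated constant $(R^4+\delta^{-4})$; this is harmless under the scaling $R=\delta^{-1}=\sqrt{\vartheta\ln N}$ but strictly should be carried along.
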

\begin{proof}
First, we let the set $\mathcal{N}_\eta$ evolve along the characteristics of the kinetic equation
$$
\mathcal{N}_{\eta,t}:=\Big\{(\overline{X}_t^{\varepsilon,\delta},\overline{V}_t^{\varepsilon,\delta}):
\Big|N^\eta\Phi^{\varepsilon,\delta}(\overline{X}_t^{\varepsilon,\delta})-N^\eta\overline{\Phi}^{\varepsilon,\delta}(\overline{X}_t^{\varepsilon,\delta})\Big|_\infty>1\Big\}
$$
and consider the following fact
$$
\mathcal{N}_{\eta,t}\subseteq\bigoplus_{i=1}^N\mathcal{N}_{\eta,t}^i,
$$
where
{\footnotesize\begin{align*}
\mathcal{N}_{\eta,t}^i:=\left\{(\overline{x}_i^{\varepsilon,\delta},\overline{v}_i^{\varepsilon,\delta}):
\Big|N^\eta\beta\frac{\frac{1}{N}\sum\limits_{j=1}^N \phi_R(\overline{v}_j^{\varepsilon,\delta})\phi^{\varepsilon}(\overline{x}_i^{\varepsilon,\delta}-\overline{x}_j^{\varepsilon,\delta})}
{\frac{1}{N}\sum\limits_{j=1}^N\phi^{\varepsilon}(\overline{x}_i^{\varepsilon,\delta}-\overline{x}_j^{\varepsilon,\delta})+\delta}
-N^\eta
\beta\frac{\int_{\R^{2d}}v\phi^{\varepsilon}(\overline{x}_i^{\varepsilon,\delta}-y) f(t,y,v)dvdy}{\int_{\R^{2d}}\phi^{\varepsilon}(\overline{x}_i^{\varepsilon,\delta}-y) f(t,y,v)dvdy+\delta}
\Big|_\infty>1\right\}.
\end{align*}}
So, using the symmetry argument in exchanging any two coordinates, we can get
$$
\mathbb{P}_t(\mathcal{N}_{\eta,t})\le\sum_{i=1}^N\mathbb{P}_t(\mathcal{N}_{\eta,t}^i)=N\mathbb{P}_t(\mathcal{N}_{\eta,t}^1).
$$
Using Markov inequality gives
\begin{align*}
\mathbb{P}_t(\mathcal{N}_{\eta,t}^1)
&\le \mathbb{E}_t\Big[
\Big(N^\eta\beta\frac{\frac{1}{N}\sum\limits_{j=1}^N \phi_R(\overline{v}_j^{\varepsilon,\delta})\phi^{\varepsilon}(\overline{x}_1^{\varepsilon,\delta}-\overline{x}_j^{\varepsilon,\delta})}
{\frac{1}{N}\sum\limits_{j=1}^N\phi^{\varepsilon}(\overline{x}_1^{\varepsilon,\delta}-\overline{x}_j^{\varepsilon,\delta})+\delta}
-N^\eta
\beta\frac{\int_{\R^{2d}}v\phi^{\varepsilon}(\overline{x}_1^{\varepsilon,\delta}-y) f(t,y,v)dvdy}{\int_{\R^{2d}}\phi^{\varepsilon}(\overline{x}_1^{\varepsilon,\delta}-y) f(t,y,v)dvdy+\delta}
\Big)^4\Big]\nonumber\\
&\le C(N^\eta\beta)^4
\mathbb{E}_t\Big[\Big\{\frac{\Big(\frac{1}{N}\sum\limits_{j=1}^N \phi_R(\overline{v}_j^{\varepsilon,\delta})\phi^{\varepsilon}(\overline{x}_1^{\varepsilon,\delta}-\overline{x}_j^{\varepsilon,\delta})-\int_{\R^{2d}}v\phi^{\varepsilon}(\overline{x}_1^{\varepsilon,\delta}-y) f(t,y,v)dvdy\Big)}{\frac{1}{N}\sum\limits_{j=1}^N\phi^{\varepsilon}(\overline{x}_1^{\varepsilon,\delta}-\overline{x}_j^{\varepsilon,\delta})+\delta}\Big\}^4\Big]\nonumber\\
&~~~+C(N^\eta\beta)^4
\mathbb{E}_t\Big[\Big\{\int_{\R^{2d}}v\phi^{\varepsilon}(\overline{x}_1^{\varepsilon,\delta}-y) f(t,y,v)dvdy\nonumber\\
&\qquad\qquad\qquad\qquad\quad\Big(\int_{\R^{2d}}\phi^{\varepsilon}(\overline{x}_1^{\varepsilon,\delta}-y) f(t,y,v)dvdy-\frac{1}{N}\sum\limits_{j=1}^N \phi^{\varepsilon}(\overline{x}_1^{\varepsilon,\delta}-\overline{x}_j^{\varepsilon,\delta})\Big)\nonumber\\
&\qquad\qquad\qquad\qquad\Bigg/\Big(\frac{1}{N}\sum\limits_{j=1}^N\phi^{\varepsilon}(\overline{x}_1^{\varepsilon,\delta}-\overline{x}_j^{\varepsilon,\delta})
+\delta\Big)\Big(\int_{\R^{2d}}\phi^{\varepsilon}(\overline{x}_1^{\varepsilon,\delta}-y) f(t,y,v)dvdy+\delta\Big)\Big\}^4\Big]\nonumber\\
&\le C(\frac{N^\eta\beta}{\delta})^4\mathbb{E}_t\Big[\Big(\frac{1}{N}\sum\limits_{j=1}^N \phi_R(\overline{v}_j^{\varepsilon,\delta})\phi^{\varepsilon}(\overline{x}_1^{\varepsilon,\delta}-\overline{x}_j^{\varepsilon,\delta})-\int_{\R^{2d}}v\phi^{\varepsilon}(\overline{x}_1^{\varepsilon,\delta}-y) f(t,y,v)dvdy\Big)^4\Big]\nonumber\\
&~~~+C(\frac{N^\eta\beta}{\delta^2})^4
\mathbb{E}_t\Big[\Big(\int_{\R^{2d}}v\phi^{\varepsilon}(\overline{x}_1^{\varepsilon,\delta}-y) f(t,y,v)dvdy\nonumber\\
&\qquad\qquad\qquad\qquad\qquad(\int_{\R^{2d}}\phi^{\varepsilon}(\overline{x}_1^{\varepsilon,\delta}-y) f(t,y,v)dvdy-\frac{1}{N}\sum\limits_{j=1}^N \phi^{\varepsilon}(\overline{x}_1^{\varepsilon,\delta}-\overline{x}_j^{\varepsilon,\delta}))\Big)^4\Big]\nonumber\\
&\le C\Big(\frac{N^\eta\beta}{\delta N}\Big)^4\mathbb{E}_t\Big[\Big(\sum\limits_{j=1}^N \phi_R(\overline{v}_j^{\varepsilon,\delta})\phi^{\varepsilon}(\overline{x}_1^{\varepsilon,\delta}-\overline{x}_j^{\varepsilon,\delta})-N\int_{\R^{2d}}v\phi^{\varepsilon}(\overline{x}_1^{\varepsilon,\delta}-y) f(t,y,v)dvdy\Big)^4\Big]\nonumber\\
&~~~+C\Big(\frac{N^\eta\beta}{\delta^2 N}\Big)^4\mathbb{E}_t\Big[\Big(\int_{\R^{2d}}v\phi^{\varepsilon}(\overline{x}_1^{\varepsilon,\delta}-y) f(t,y,v)dvdy\nonumber\\
&\quad\quad\quad\quad\quad\quad\quad\quad\quad\quad(N\int_{\R^{2d}}\phi^{\varepsilon}(\overline{x}_1^{\varepsilon,\delta}-y) f(t,y,v)dvdy
-\sum\limits_{j=1}^N \phi^{\varepsilon}(\overline{x}_1^{\varepsilon,\delta}-\overline{x}_j^{\varepsilon,\delta}))\Big)^4\Big]\nonumber\\
&=:I_1+I_2.
\end{align*}

First, we consider the term $I_1$. Similar to Lemma \ref{lm3.1}, we define $h_j:=\phi_R(\overline{v}_j^{\varepsilon,\delta})\phi^{\varepsilon}(\overline{x}_1^{\varepsilon,\delta}-\overline{x}_j^{\varepsilon,\delta})-\int_{\mathbb{R}^d}\int_{\mathbb{R}^d}v\phi^{\varepsilon}(\overline{x}_1^{\varepsilon,\delta}-y) f(t,y,v)dvdy$. With the same argument as in Lemma \ref{lm3.1}, we get
$$
I_1=C\Big(\frac{N^\eta\beta}{\delta N}\Big)^4
\mathbb{E}_t\Big[\sum_{j=1}^Nh_j^4+\sum_{1\le m<n}^N6h_m^2h_n^2\Big].
$$
Notice that
\begin{align*}
\mathbb{E}_t\Big[h_j^4\Big]
&=\mathbb{E}_t\Big[\Big(\phi_R(\overline{v}_j^{\varepsilon,\delta})\phi^{\varepsilon}(\overline{x}_1^{\varepsilon,\delta}-\overline{x}_j^{\varepsilon,\delta})-\int_{\R^{2d}}v\phi^{\varepsilon}(\overline{x}_1^{\varepsilon,\delta}-y) f(t,y,v)dvdy\Big)^4\Big]\\
&\le\mathbb{E}_t\Big[\Big(|\phi_R(\overline{v}_j^{\varepsilon,\delta})\phi^{\varepsilon}(\overline{x}_1^{\varepsilon,\delta}-\overline{x}_j^{\varepsilon,\delta})|
+|\int_{\R^{2d}}v\phi^{\varepsilon}(\overline{x}_1^{\varepsilon,\delta}-y) f(t,y,v)dvdy|\Big)^4\Big]\\
&\le \varepsilon^{-4d}\mathbb{E}_t\Big[(|\phi_R(\overline{v}_j^{\varepsilon,\delta})|+C)^4\Big]\\
&\le CR^4\varepsilon^{-4d},
\end{align*}
and
\begin{align*}
\mathbb{E}_t\Big[h_m^2h_n^2\Big]
&=\mathbb{E}_t\Big[\Big(\phi_R(\overline{v}_m^{\varepsilon,\delta})\phi^{\varepsilon}(\overline{x}_1^{\varepsilon,\delta}-\overline{x}_m^{\varepsilon,\delta})-\int_{\R^{2d}}v\phi^{\varepsilon}(\overline{x}_1^{\varepsilon,\delta}-y) f(t,y,v)dvdy\Big)^2\\
&~~~~~~~~\Big(\phi_R(\overline{v}_n^{\varepsilon,\delta})\phi^{\varepsilon}(\overline{x}_1^{\varepsilon,\delta}-\overline{x}_n^{\varepsilon,\delta})-\int_{\R^{2d}}v\phi^{\varepsilon}(\overline{x}_1^{\varepsilon,\delta}-y) f(t,y,v)dvdy\Big)^2\Big]\\
&\le\mathbb{E}_t\Big[\Big(|\phi_R(\overline{v}_m^{\varepsilon,\delta})\phi^{\varepsilon}(\overline{x}_1^{\varepsilon,\delta}-\overline{x}_m^{\varepsilon,\delta})|
+|\int_{\R^{2d}}v\phi^{\varepsilon}(\overline{x}_1^{\varepsilon,\delta}-y) f(t,y,v)dvdy|\Big)^2\\
&~~~~~~~~\Big(|\phi_R(\overline{v}_n^{\varepsilon,\delta})\phi^{\varepsilon}(\overline{x}_1^{\varepsilon,\delta}-\overline{x}_n^{\varepsilon,\delta})|
+|\int_{\R^{2d}}v\phi^{\varepsilon}(\overline{x}_1^{\varepsilon,\delta}-y) f(t,y,v)dvdy|\Big)^2\Big]\\
&\le\varepsilon^{-4d}\mathbb{E}_t\Big[(|\phi_R(\overline{v}_m^{\varepsilon,\delta})|+C)^2\Big]
\mathbb{E}_t\Big[(|\phi_R(\overline{v}_n^{\varepsilon,\delta})|+C)^2\Big]\\
&\le CR^4\varepsilon^{-4d}.
\end{align*}
Then we have
\begin{align*}
I_1&=C\Big(\frac{N^\eta\beta}{\delta N}\Big)^4
\mathbb{E}_t\Big[\sum_{j=1}^Nh_j^4+\sum_{1\le m<n}^N6h_m^2h_n^2\Big]\\
&\le CR^4\varepsilon^{-4d}\Big(\frac{N^\eta\beta}{\delta N}\Big)^4
\Big(N+\frac{N(N-1)}{2}\Big)\le C\beta^4R^4\varepsilon^{-4d}\delta^{-4}N^{-(2-4\eta)}.
\end{align*}

Next, we deal with the term $I_2$. Similar to $I_1$, we get
\begin{align*}
I_2\le C\varepsilon^{-4d}\Big(\frac{N^\eta\beta}{\delta^2 N}\Big)^4
\Big(N+\frac{N(N-1)}{2}\Big)\le C\beta^4\varepsilon^{-4d}\delta^{-8}N^{-(2-4\eta)}.
\end{align*}
Therefore, we have
$$
\mathbb{P}_t(\mathcal{N}_{\eta,t}^1)\le C\beta^4\varepsilon^{-4d}\delta^{-4}N^{-(2-4\eta)}(R^4+\delta^{-4}),
$$
then
$$
\mathbb{P}_0(\mathcal{N}_\eta)
=\mathbb{P}_t(\mathcal{N}_{\eta,t})
\le N\mathbb{P}_t(\mathcal{N}_{\eta,t}^1)
\le C\beta^4\varepsilon^{-4d}N^{-(1-4\eta)}(R^4+\delta^{-4}).
$$
\end{proof}
\begin{lemma}\label{lm3.3'}
There exists a constant $C>0$ such that
$$
\mathbb{P}_0(\mathcal{N}_\mu)\le C\varepsilon^{-4(d+1)}N^{-(1-4\mu)}.
$$
\end{lemma}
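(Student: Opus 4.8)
The plan is to follow exactly the law-of-large-numbers scheme used in Lemmas \ref{lm3.1} and \ref{lm3.2}; the argument is in fact slightly simpler here since the kernel $|(\phi^{\varepsilon})'|$ is uniformly bounded and no refinement via Assumption \ref{assum} is needed. First I would transport the set $\mathcal{N}_\mu$ along the characteristics of the kinetic equation \eqref{0.5}, setting
$$
\mathcal{N}_{\mu,t}:=\Big\{(\overline{X}_t^{\varepsilon,\delta},\overline{V}_t^{\varepsilon,\delta}):
\Big|N^\mu P^{\varepsilon,\delta}(\overline{X}_t^{\varepsilon,\delta})-N^\mu\overline{P}^{\varepsilon,\delta}(\overline{X}_t^{\varepsilon,\delta})\Big|_\infty>1\Big\},
$$
and decomposing $\mathcal{N}_{\mu,t}\subseteq\bigoplus_{i=1}^N\mathcal{N}_{\mu,t}^i$ into the one-particle events
$$
\mathcal{N}_{\mu,t}^i:=\Big\{(\overline{x}_i^{\varepsilon,\delta},\overline{v}_i^{\varepsilon,\delta}):
\Big|\tfrac{N^\mu}{N}\sum_{j=1}^N|(\phi^{\varepsilon})'(\overline{x}_i^{\varepsilon,\delta}-\overline{x}_j^{\varepsilon,\delta})|
-N^\mu\big(|(\phi^{\varepsilon})'|\ast\rho^{\varepsilon,\delta}\big)(t,\overline{x}_i^{\varepsilon,\delta})\Big|>1\Big\}.
$$
Exchangeability of the $N$ i.i.d.\ copies then gives $\mathbb{P}_t(\mathcal{N}_{\mu,t})\le N\,\mathbb{P}_t(\mathcal{N}_{\mu,t}^1)$.

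Next I would apply the Markov inequality at fourth order and introduce $h_j:=|(\phi^{\varepsilon})'(\overline{x}_1^{\varepsilon,\delta}-\overline{x}_j^{\varepsilon,\delta})|-\int_{\Rd}|(\phi^{\varepsilon})'(\overline{x}_1^{\varepsilon,\delta}-y)|\rho^{\varepsilon,\delta}(t,y)\,dy$. Conditionally on $\overline{x}_1^{\varepsilon,\delta}$, the variables $h_j$ ($j\ge2$) are i.i.d.\ and centered, so in the expansion of $\mathbb{E}_t\big[(\sum_{j}h_j)^4\big]$ only the terms $\sum_j h_j^4$ and $\sum_{1\le m<n}^N 6\,h_m^2h_n^2$ survive, exactly as in Lemma \ref{lm3.1}. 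Since $\phi^{\varepsilon}=\varepsilon^{-d}\phi(\cdot/\varepsilon)$ with $\phi$ a standard mollifier, one has $\|(\phi^{\varepsilon})'\|_\infty\le C\varepsilon^{-(d+1)}$, and together with $\|\rho^{\varepsilon,\delta}(t,\cdot)\|_{L^1}=1$ this yields the uniform bound $|h_j|\le C\varepsilon^{-(d+1)}$; hence $\mathbb{E}_t[h_j^4]\le C\varepsilon^{-4(d+1)}$ and $\mathbb{E}_t[h_m^2h_n^2]\le C\varepsilon^{-4(d+1)}$.

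Combining these estimates,
$$
\mathbb{P}_t(\mathcal{N}_{\mu,t}^1)\le\Big(\tfrac{N^\mu}{N}\Big)^4\Big(C\varepsilon^{-4(d+1)}N+C\varepsilon^{-4(d+1)}\tfrac{N(N-1)}{2}\Big)\le C\varepsilon^{-4(d+1)}N^{-(2-4\mu)},
$$
and multiplying by $N$ gives $\mathbb{P}_0(\mathcal{N}_\mu)=\mathbb{P}_t(\mathcal{N}_{\mu,t})\le C\varepsilon^{-4(d+1)}N^{-(1-4\mu)}$, which is the claim. There is no real obstacle in this lemma; the only point that needs care is the uniform bound $\|(\phi^{\varepsilon})'\|_\infty\le C\varepsilon^{-(d+1)}$, which accounts for the exponent $\varepsilon^{-4(d+1)}$ and is why no analogue of the sharper variance estimate used for $q^\varepsilon$ in Lemma \ref{lm3.2} is required here.
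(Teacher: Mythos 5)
Your proposal matches the paper's proof essentially line for line: the same transport of $\mathcal{N}_\mu$ along the characteristics, the same union bound and exchangeability reduction to one particle, the same fourth-moment Markov argument with the centered variables $h_j$, and the same uniform bound $\|(\phi^{\varepsilon})'\|_\infty\le C\varepsilon^{-(d+1)}$ yielding $\mathbb{E}_t[h_j^4],\,\mathbb{E}_t[h_m^2h_n^2]\le C\varepsilon^{-4(d+1)}$. The only difference is that you spell out the origin of the $\varepsilon^{-4(d+1)}$ bound, which the paper leaves as "similar to Lemma \ref{lm3.1}"; the argument is correct.
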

\begin{proof}
First, we let the set $\mathcal{N}_\mu$ evolve along the characteristics of the kinetic equation
$$
\mathcal{N}_{\mu,t}:=\Big\{(\overline{X}_t^{\varepsilon,\delta},\overline{V}_t^{\varepsilon,\delta}):
\Big|N^\mu P^{\varepsilon,\delta}(\overline{X}_t^{\varepsilon,\delta})-N^\mu\overline{P}^{\varepsilon,\delta}(\overline{X}_t^{\varepsilon,\delta})\Big|_\infty>1\Big\}
$$
and consider the following fact
$$
\mathcal{N}_{\mu,t}\subseteq\bigoplus_{i=1}^N\mathcal{N}_{\mu,t}^i,
$$
where
{\small\begin{align*}
\mathcal{N}_{\mu,t}^i:=\left\{(\overline{x}_i^{\varepsilon,\delta},\overline{v}_i^{\varepsilon,\delta}):
\Big|N^\mu\frac{1}{N}
\sum_{j=1}^N|\phi^{\varepsilon'}(\overline{x}_i^{\varepsilon,\delta}-\overline{x}_j^{\varepsilon,\delta})|
-N^\mu\int_{\mathbb{R}^d}|\phi^{\varepsilon'}(\overline{x}_i^{\varepsilon,\delta}-y)|\rho^{\varepsilon,\delta}(t,y)dy\Big|_\infty>1
\right\}.
\end{align*}}
So, using the symmetry argument in exchanging any two coordinates, we can get
$$
\mathbb{P}_t(\mathcal{N}_{\mu,t})\le\sum_{i=1}^N\mathbb{P}_t(\mathcal{N}_{\mu,t}^i)=N\mathbb{P}_t(\mathcal{N}_{\mu,t}^1).
$$
Using Markov inequality gives
\begin{align*}
\mathbb{P}_t(\mathcal{N}_{\mu,t}^1)
&\le\mathbb{E}_t\Big[\Big(N^\mu\frac{1}{N}
\sum_{j=1}^N|\phi^{\varepsilon'}(\overline{x}_1^{\varepsilon,\delta}-\overline{x}_j^{\varepsilon,\delta})|
-N^\mu\int_{\mathbb{R}^d}|\phi^{\varepsilon'}(\overline{x}_1^{\varepsilon,\delta}-y)|\rho^{\varepsilon,\delta}(t,y)dy\Big)^4\Big]\\
&\le\Big(\frac{N^\mu}{N}\Big)^4\mathbb{E}_t\Big[\Big(\sum_{j=1}^N|\phi^{\varepsilon'}(\overline{x}_1^{\varepsilon,\delta}-\overline{x}_j^{\varepsilon,\delta})|
-N\int_{\mathbb{R}^d}|\phi^{\varepsilon'}(\overline{x}_1^{\varepsilon,\delta}-y)|\rho^{\varepsilon,\delta}(t,y)dy\Big)^4\Big].
\end{align*}
Define $h_j:=|\phi^{\varepsilon'}(\overline{x}_1^{\varepsilon,\delta}-\overline{x}_j^{\varepsilon,\delta})|
-\int_{\mathbb{R}^d}|\phi^{\varepsilon'}(\overline{x}_1^{\varepsilon,\delta}-y)|\rho^{\varepsilon,\delta}(t,y)dy$. Similar to Lemma \ref{lm3.1}, we get
$$
\mathbb{E}_t\Big[h_j^4\Big]\le C\varepsilon^{-4(d+1)},\quad
\mathbb{E}_t\Big[h_m^2h_n^2\Big]\le C\varepsilon^{-4(d+1)}.
$$
Therefore, we have
$$
\mathbb{P}_t(\mathcal{N}_{\mu,t}^1)\le C\Big(\frac{N^\mu}{N}\Big)^4\varepsilon^{-4(d+1)}\Big(N+\frac{N(N-1)}{2}\Big)
\le C\varepsilon^{-4(d+1)}N^{-(2-4\mu)},
$$
then
$$
\mathbb{P}_0(\mathcal{N}_{\mu})=\mathbb{P}_t(\mathcal{N}_{\mu,t})
\le N\mathbb{P}_t(\mathcal{N}_{\mu,t}^1)
\le C\varepsilon^{-4(d+1)}N^{-(1-4\mu)}.
$$
\end{proof}
\begin{lemma}\label{lm3.4}
Let $\mathcal{N}_\alpha,\,\mathcal{N}_\kappa,\,\mathcal{N}_\gamma,\,\mathcal{N}_\eta,\,\mathcal{N}_\mu$ be defined as in \eqref{def3.1-1} -- \eqref{3.2-4}. Suppose that $f^{\varepsilon,\delta}(t,x,v)$ satisfies Assumptions \ref{assum}. Then there exists a constant $C>0$ such that
{\small\begin{align*}
&\Big|\Big(V_t^{\varepsilon,\delta},\Psi^{\varepsilon,\delta}(X_t^{\varepsilon,\delta})
+\Gamma(X_t^{\varepsilon,\delta},V_t^{\varepsilon,\delta})+\Phi^{\varepsilon,\delta}(X_t^{\varepsilon,\delta})\Big)
-\Big(\overline{V}_t^{\varepsilon,\delta},\overline{\Psi}^{\varepsilon,\delta}(\overline{X}_t^{\varepsilon,\delta})
+\Gamma(\overline{X}_t^{\varepsilon,\delta},\overline{V}_t^{\varepsilon,\delta})+\overline{\Phi}^{\varepsilon,\delta}(X_t^{\varepsilon,\delta})\Big)\Big|_\infty\\
&\le \Big(C+CR\delta^{-1}+CR\delta^{-1}N^{-\alpha}\varepsilon^{-(d+1)}\Big)S_t(X,V)N^{-\alpha}+N^{-\kappa}+N^{-\eta}
\end{align*}}
for all initial data $(X,V)\in(\mathcal{N}_\alpha\cup\mathcal{N}_\kappa\cup\mathcal{N}_\gamma\cup\mathcal{N}_\eta\cup\mathcal{N}_\mu)^c$.
\end{lemma}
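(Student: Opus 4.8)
\emph{Proof strategy.} The quantity to be estimated is the $|\cdot|_\infty$-distance on $\mathbb{R}^{2dN}$ between the two drift fields of \eqref{def2.1-1} and \eqref{def2.2-1}, evaluated along the respective flows, and I would split it by the triangle inequality into the four blocks $|V_t^{\varepsilon,\delta}-\overline V_t^{\varepsilon,\delta}|_\infty$, $|\Psi^{\varepsilon,\delta}(X_t^{\varepsilon,\delta})-\overline\Psi^{\varepsilon,\delta}(\overline X_t^{\varepsilon,\delta})|_\infty$, $|\Gamma(X_t^{\varepsilon,\delta},V_t^{\varepsilon,\delta})-\Gamma(\overline X_t^{\varepsilon,\delta},\overline V_t^{\varepsilon,\delta})|_\infty$ and $|\Phi^{\varepsilon,\delta}(X_t^{\varepsilon,\delta})-\overline\Phi^{\varepsilon,\delta}(\overline X_t^{\varepsilon,\delta})|_\infty$, always restricting to the good set $(\mathcal N_\alpha\cup\mathcal N_\kappa\cup\mathcal N_\gamma\cup\mathcal N_\eta\cup\mathcal N_\mu)^c$. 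On $\mathcal N_\alpha^c$ the minimum defining $S_t$ is attained by its second entry, so $|(X_s^{\varepsilon,\delta},V_s^{\varepsilon,\delta})-(\overline X_s^{\varepsilon,\delta},\overline V_s^{\varepsilon,\delta})|_\infty\le S_t(X,V)\,N^{-\alpha}$ for all $s\le t$. This settles the velocity block directly, and combined with the Lipschitz estimate of Lemma \ref{lm2.1}(iii) it gives $|\Gamma(X_t^{\varepsilon,\delta},V_t^{\varepsilon,\delta})-\Gamma(\overline X_t^{\varepsilon,\delta},\overline V_t^{\varepsilon,\delta})|_\infty\le 2L\,S_t(X,V)\,N^{-\alpha}$.

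\emph{Interaction block.} Insert the intermediate configuration: $\Psi^{\varepsilon,\delta}(X_t^{\varepsilon,\delta})-\overline\Psi^{\varepsilon,\delta}(\overline X_t^{\varepsilon,\delta})=\big(\Psi^{\varepsilon,\delta}(X_t^{\varepsilon,\delta})-\Psi^{\varepsilon,\delta}(\overline X_t^{\varepsilon,\delta})\big)+\big(\Psi^{\varepsilon,\delta}(\overline X_t^{\varepsilon,\delta})-\overline\Psi^{\varepsilon,\delta}(\overline X_t^{\varepsilon,\delta})\big)$. The second summand is $\le N^{-\kappa}$ on $\mathcal N_\kappa^c$ by definition. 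For the first, $\alpha>\theta$ gives $|(x_i^{\varepsilon,\delta}-x_j^{\varepsilon,\delta})-(\overline x_i^{\varepsilon,\delta}-\overline x_j^{\varepsilon,\delta})|\le 2S_tN^{-\alpha}\le 2N^{-\alpha}<2\varepsilon$, so Lemma \ref{lm2.1}(ii) applies coordinatewise and produces the bound $\frac{2\lambda}{N}\sum_j q^\varepsilon(\overline x_i^{\varepsilon,\delta}-\overline x_j^{\varepsilon,\delta})\,S_tN^{-\alpha}=2\big|(Q^{\varepsilon,\delta}(\overline X_t^{\varepsilon,\delta}))_i\big|\,S_tN^{-\alpha}$. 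On $\mathcal N_\gamma^c$ one has $|(Q^{\varepsilon,\delta})_i|\le N^{-\gamma}+|(\overline Q^{\varepsilon,\delta})_i|$, and $|(\overline Q^{\varepsilon,\delta})_i|\le C$ because splitting $\int q^\varepsilon(x-y)\rho^{\varepsilon,\delta}(t,y)\,dy$ at $|x-y|=3\varepsilon$ and using $\varepsilon^{-d}\le 3^d|x-y|^{-d}$ on the inner region reduces it to Assumption \ref{assum}(iii). Hence the interaction block is $\le C\,S_tN^{-\alpha}+N^{-\kappa}$.

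\emph{Alignment block, the main difficulty.} Again insert the intermediate configuration: the piece $\Phi^{\varepsilon,\delta}(\overline X_t^{\varepsilon,\delta})-\overline\Phi^{\varepsilon,\delta}(\overline X_t^{\varepsilon,\delta})$ is $\le N^{-\eta}$ on $\mathcal N_\eta^c$, and it remains to estimate $\beta\,|u^{\varepsilon,\delta}(x_i^{\varepsilon,\delta})-\widetilde u^{\varepsilon,\delta}(\overline x_i^{\varepsilon,\delta})|$, where $\widetilde u^{\varepsilon,\delta}$ denotes $u^{\varepsilon,\delta}$ read off the barred configuration. Writing the two quotients as $A/(B+\delta)$ and $\widetilde A/(\widetilde B+\delta)$ with $B=\frac1N\sum_j\phi^\varepsilon(x_i^{\varepsilon,\delta}-x_j^{\varepsilon,\delta})\ge 0$ and $A=\frac1N\sum_j\phi_R(v_j^{\varepsilon,\delta})\phi^\varepsilon(x_i^{\varepsilon,\delta}-x_j^{\varepsilon,\delta})$, I would use the identity
\[
\frac{A}{B+\delta}-\frac{\widetilde A}{\widetilde B+\delta}=\frac{A-\widetilde A}{B+\delta}-\frac{\widetilde A}{\widetilde B+\delta}\cdot\frac{B-\widetilde B}{B+\delta},
\]
which confines the vacuum of $B$ to at most a factor $\delta^{-1}$ and in which $|\widetilde A/(\widetilde B+\delta)|\le 2R$ since $|\phi_R|\le 2R$. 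In $A-\widetilde A$, adding and subtracting $\phi_R(\overline v_j^{\varepsilon,\delta})\phi^\varepsilon(x_i^{\varepsilon,\delta}-x_j^{\varepsilon,\delta})$ peels off the term $\frac1{B+\delta}\cdot\frac1N\sum_j|\phi_R(v_j^{\varepsilon,\delta})-\phi_R(\overline v_j^{\varepsilon,\delta})|\phi^\varepsilon(x_i^{\varepsilon,\delta}-x_j^{\varepsilon,\delta})\le\|\nabla\phi_R\|_\infty S_tN^{-\alpha}\cdot\frac{B}{B+\delta}\le C\,S_tN^{-\alpha}$, where the crucial point is that $\|\nabla\phi_R\|_\infty$ is bounded independently of $R$; all remaining pieces, and the $B-\widetilde B$ term, then reduce to controlling $\frac1N\sum_j|\phi^\varepsilon(x_i^{\varepsilon,\delta}-x_j^{\varepsilon,\delta})-\phi^\varepsilon(\overline x_i^{\varepsilon,\delta}-\overline x_j^{\varepsilon,\delta})|$ with a prefactor of order $R\delta^{-1}$. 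For this last average I would Taylor expand $\phi^\varepsilon$ about $\overline x_i^{\varepsilon,\delta}-\overline x_j^{\varepsilon,\delta}$: the linear term is dominated by $2S_tN^{-\alpha}(P^{\varepsilon,\delta}(\overline X_t^{\varepsilon,\delta}))_i$, and on $\mathcal N_\mu^c$ one has $(P^{\varepsilon,\delta})_i\le N^{-\mu}+(\overline P^{\varepsilon,\delta})_i\le C$ by Assumption \ref{assum}(ii), while the quadratic remainder is handled with the second-derivative bound on $\phi^\varepsilon$, using $|(x_i^{\varepsilon,\delta}-x_j^{\varepsilon,\delta})-(\overline x_i^{\varepsilon,\delta}-\overline x_j^{\varepsilon,\delta})|\le 2S_tN^{-\alpha}$ for one factor and $\le 2N^{-\alpha}<2\varepsilon$ for the other, producing a contribution of the form $CR\delta^{-1}N^{-\alpha}\varepsilon^{-(d+1)}S_tN^{-\alpha}$. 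Adding the four blocks yields exactly the stated inequality: the coefficient $C+CR\delta^{-1}+CR\delta^{-1}N^{-\alpha}\varepsilon^{-(d+1)}$ in front of $S_tN^{-\alpha}$ comes from the alignment block, the additive $N^{-\kappa}+N^{-\eta}$ from $\mathcal N_\kappa^c$ and $\mathcal N_\eta^c$, and $\mathcal N_\gamma$, $\mathcal N_\mu$ enter only through the $O(1)$ bounds on $Q^{\varepsilon,\delta}$ and $P^{\varepsilon,\delta}$, hence affect the coefficient but not the additive error. The principal obstacle is precisely this alignment estimate: one must simultaneously cope with the possibly vanishing denominator (the vacuum), which forces the negative powers of $\delta$ and $R$ to appear, and with the blow-up of the derivatives of the mollifier $\phi^\varepsilon$, which is tamed only by combining the first-order expansion with Assumption \ref{assum}(ii) and by exploiting the smallness $N^{-\alpha}<\varepsilon$ in the second-order remainder.
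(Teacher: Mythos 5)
Your proposal is correct and follows essentially the same route as the paper's proof: the same six-term triangle-inequality decomposition (velocity, $\Psi$ consistency and fluctuation, $\Gamma$ via Lipschitz continuity, $\Phi$ consistency and fluctuation), with $\mathcal N_\kappa$ and $\mathcal N_\eta$ supplying the additive errors $N^{-\kappa}+N^{-\eta}$, with $\mathcal N_\gamma$ and $\mathcal N_\mu$ combined with Assumption \ref{assum} yielding the $O(1)$ bounds on $Q^{\varepsilon,\delta}$ and $P^{\varepsilon,\delta}$, and with the same quotient decomposition plus Taylor expansion of $\phi^{\varepsilon}$ (and the $R$-uniform Lipschitz bound on $\phi_R$) for the local-alignment term. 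The only, and shared, bookkeeping quirk is in the quadratic Taylor remainder, where both your accounting and the paper's actually produce a term of order $R\delta^{-1}\varepsilon^{-(d+2)}N^{-\alpha}\cdot S_tN^{-\alpha}$ rather than literally $R\delta^{-1}\varepsilon^{-(d+1)}N^{-\alpha}\cdot S_tN^{-\alpha}$; this does not change the structure of the argument.
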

\begin{proof} A series of calculations give that
{\small\begin{align*}
&\Big|\Big(V_t^{\varepsilon,\delta},\Psi^{\varepsilon,\delta}(X_t^{\varepsilon,\delta})
+\Gamma(X_t^{\varepsilon,\delta},V_t^{\varepsilon,\delta})+\Phi^{\varepsilon,\delta}(X_t^{\varepsilon,\delta})\Big)
-\Big(\overline{V}_t^{\varepsilon,\delta},\overline{\Psi}^{\varepsilon,\delta}(\overline{X}_t^{\varepsilon,\delta})
+\Gamma(\overline{X}_t^{\varepsilon,\delta},\overline{V}_t^{\varepsilon,\delta})+\overline{\Phi}^{\varepsilon,\delta}(X_t^{\varepsilon,\delta})\Big)\Big|_\infty\\
&\le\Big|V_t^{\varepsilon,\delta}-\overline{V}_t^{\varepsilon,\delta}\Big|_\infty
+\Big|\Psi^{\varepsilon,\delta}(X_t^{\varepsilon,\delta})-\overline{\Psi}^{\varepsilon,\delta}(\overline{X}_t^{\varepsilon,\delta})\Big|_\infty
+\Big|\Gamma(X_t^{\varepsilon,\delta},V_t^{\varepsilon,\delta})-\Gamma(\overline{X}_t^{\varepsilon,\delta},\overline{V}_t^{\varepsilon,\delta})\Big|_\infty\\
&~~~+\Big|\Phi^{\varepsilon,\delta}(X_t^{\varepsilon,\delta})-\overline{\Phi}^{\varepsilon,\delta}(\overline{X}_t^{\varepsilon,\delta})\Big|_\infty\\
&\le\Big|V_t^{\varepsilon,\delta}-\overline{V}_t^{\varepsilon,\delta}\Big|_\infty
+\Big|\Psi^{\varepsilon,\delta}(X_t^{\varepsilon,\delta})-\Psi^{\varepsilon,\delta}(\overline{X}_t^{\varepsilon,\delta})\Big|_\infty
+\Big|\Psi^{\varepsilon,\delta}(\overline{X}_t^{\varepsilon,\delta})-\overline{\Psi}^{\varepsilon,\delta}(\overline{X}_t^{\varepsilon,\delta})\Big|_\infty\\
&~~~+\Big|\Gamma(X_t^{\varepsilon,\delta},V_t^{\varepsilon,\delta})-\Gamma(\overline{X}_t^{\varepsilon,\delta},\overline{V}_t^{\varepsilon,\delta})\Big|_\infty
+\Big|\Phi^{\varepsilon,\delta}(X_t^{\varepsilon,\delta})-\Phi^{\varepsilon,\delta}(\overline{X}_t^{\varepsilon,\delta})\Big|_\infty
+\Big|\Phi^{\varepsilon,\delta}(\overline{X}_t^{\varepsilon,\delta})-\overline{\Phi}^{\varepsilon,\delta}(\overline{X}_t^{\varepsilon,\delta})\Big|_\infty\\
&=:|I_1|+|I_2|+|I_3|+|I_4|++|I_5|+|I_6|.
\end{align*}}

Next, we estimate from $|I_1|$ to $|I_6|$.

For $|I_1|$. Since $(X,V)\notin\mathcal{N}_\alpha$,
  $$|I_1|:=\Big|V_t^{\varepsilon,\delta}-\overline{V}_t^{\varepsilon,\delta}\Big|_\infty\le S_t(X,V)N^{-\alpha}.$$

For $|I_2|$. \begin{align*} \Big|(\Psi^{\varepsilon}(X_t^{\varepsilon,\delta}))_i-(\Psi^{\varepsilon}(\overline{X}_t^{\varepsilon,\delta}))_i\Big|
      &\le\Big|\frac{\lambda}{N}
      \sum_{j=1}^N
      \nabla_xW^{\varepsilon}(x_i^{\varepsilon,\delta}-x_j^{\varepsilon,\delta})
      -\frac{\lambda}{N}
      \sum_{i\neq j}^N
      \nabla_xW^{\varepsilon}(\overline{x}_i^{\varepsilon,\delta}-\overline{x}_j^{\varepsilon,\delta})\Big|\\
      &\le\frac{\lambda}{N} \sum_{j=1}^N
      |q^{\varepsilon}(\overline{x}_i^{\varepsilon,\delta}-\overline{x}_j^{\varepsilon,\delta})|
      \Big(|x_i^{\varepsilon,\delta}-\overline{x}_i^{\varepsilon,\delta}|
      +|x_j^{\varepsilon,\delta}-\overline{x}_j^{\varepsilon,\delta}|\Big).
      \end{align*}
      Since $\alpha>\theta$ and $(X,V)\notin\mathcal{N}_\alpha$, for any $1\le i,j\le N$, we have that
      $$
      |x_i^{\varepsilon,\delta}-\overline{x}_i^{\varepsilon,\delta}|\le N^{-\alpha}<\varepsilon
      \quad{\rm and~}\quad
      |x_j^{\varepsilon,\delta}-\overline{x}_j^{\varepsilon,\delta}|\le N^{-\alpha}<\varepsilon,
      $$
      which satisfy Assumption \ref{lm2.1} $(ii)$.
      Moreover because $(X,V)\notin\mathcal{N}_\gamma$, we get
      \begin{align*}
       \Big|(\Psi^{\varepsilon,\delta}(X_t^{\varepsilon,\delta}))_i-(\Psi^{\varepsilon,\delta}(\overline{X}_t^{\varepsilon,\delta}))_i\Big|
       &\le 2\lambda\Big|(q^{\varepsilon}(\overline{X}_t^{\varepsilon,\delta}))_i\Big| N^{-\alpha}
       \le2\lambda(\|q^{\varepsilon}\ast\rho^{\varepsilon,\delta}\|_\infty+N^{-\gamma})N^{-\alpha}.
       \end{align*}
      From $(iii)$ of Assumption \ref{assum}, we have
      \begin{align*}
      \|q^{\varepsilon}\ast\rho^{\varepsilon,\delta}\|_\infty&=\|\int_{R^d}q^{\varepsilon}(x-y)\rho^{\varepsilon,\delta}(s,y,v)dy\|_\infty\\
      &\le C\|\int_{\{|x-y|\ge3\varepsilon\}\times\R^d}\frac{1}{|x-y|^d}f^{\varepsilon,\delta}(s,y,v)dy\|_\infty\\
      &~~~+ C\|\int_{\{|x-y|<3\varepsilon\}\times\R^d}\varepsilon^{-d} f^{\varepsilon,\delta}(s,y,v)dy\|_\infty\\
      &\le C.
      \end{align*}
      And thus
      $$
      |I_2|:=\Big|\Psi^{\varepsilon,\delta}(X_t^{\varepsilon,\delta})-\Psi^{\varepsilon,\delta}(\overline{X}_t^{\varepsilon,\delta})\Big|_\infty
      \le CS_t(X,V) N^{-\alpha}.
      $$

 For $|I_3|$. Since $(X,V)\notin\mathcal{N}_\kappa$, it follows directly
      $$
      |I_3|:=\Big|\Psi^{\varepsilon,\delta}(\overline{X}_t^{\varepsilon,\delta})-\overline{\Psi}^{\varepsilon,\delta}(\overline{X}_t^{\varepsilon,\delta})\Big|_\infty
      \le N^{-\kappa}.
      $$

   For $|I_4|$. Since $G$ is Lipschitz continuous, we have for each $1\le i\le N$

      $$
      \Big|G(x_i^{\varepsilon,\delta},v_i^{\varepsilon,\delta})
      -G(\overline{x}_i^{\varepsilon,\delta},\overline{v}_i^{\varepsilon,\delta})\Big|
      \le L
      \Big|(x_i^{\varepsilon,\delta},v_i^{\varepsilon,\delta})-(\overline{x}_i^{\varepsilon,\delta},\overline{v}_i^{\varepsilon,\delta})\Big|.
      $$
      Since $(X,V)\notin\mathcal{N}_\alpha$, we obtain
      $$
      |I_4|:=\Big|\Gamma(X_t^{\varepsilon,\delta},V_t^{\varepsilon,\delta})-\Gamma(\overline{X}_t^{\varepsilon,\delta},\overline{V}_t^{\varepsilon,\delta})\Big|_\infty
      \le LS_t(X,V)N^{-\alpha}.
      $$

  For $|I_5|$. For each $1\le i\le N$ and the definition of $\phi_R$
      {\footnotesize\begin{align*}
      &|u^{\varepsilon,\delta}(x_i^{\varepsilon,\delta})-u^{\varepsilon,\delta}(\overline{x}_i^{\varepsilon,\delta})|\\
      &=\Big|\frac{\frac{1}{N}\sum\limits_{j=1}^N \phi_R(v_j^{\varepsilon,\delta})\phi^{\varepsilon}(x_i^{\varepsilon,\delta}-x_j^{\varepsilon,\delta})}
{\frac{1}{N}\sum\limits_{j=1}^N\phi^{\varepsilon}(x_i^{\varepsilon,\delta}-x_j^{\varepsilon,\delta})+\delta}
-\frac{\frac{1}{N}\sum\limits_{j=1}^N \phi_R(\overline{v}_j^{\varepsilon,\delta})\phi^{\varepsilon}(\overline{x}_i^{\varepsilon,\delta}-\overline{x}_j^{\varepsilon,\delta})}
{\frac{1}{N}\sum\limits_{j=1}^N\phi^{\varepsilon}(\overline{x}_i^{\varepsilon,\delta}-\overline{x}_j^{\varepsilon,\delta})+\delta}\Big|\\
&\le\Big|\frac{\Big(\frac{1}{N}\sum\limits_{j=1}^N\phi^{\varepsilon}(\overline{x}_i^{\varepsilon,\delta}-\overline{x}_j^{\varepsilon,\delta})+\delta\Big)
\Big(\frac{1}{N}\sum\limits_{j=1}^N(\phi_R(v_j^{\varepsilon,\delta})-\phi_R(\overline{v}_j^{\varepsilon,\delta}))\phi^{\varepsilon}(x_i^{\varepsilon,\delta}-x_j^{\varepsilon,\delta})\Big)}
{\Big(\frac{1}{N}\sum\limits_{j=1}^N\phi^{\varepsilon}(x_i^{\varepsilon,\delta}-x_j^{\varepsilon,\delta})+\delta\Big)\Big(\frac{1}{N}\sum\limits_{j=1}^N\phi^{\varepsilon}(\overline{x}_i^{\varepsilon,\delta}-\overline{x}_j^{\varepsilon,\delta})+\delta\Big)}\Big|\\
&~~~+\Big|\frac{\Big(\frac{1}{N}\sum\limits_{j=1}^N\phi^{\varepsilon}(\overline{x}_i^{\varepsilon,\delta}-\overline{x}_j^{\varepsilon,\delta})+\delta\Big)
\Big(\frac{1}{N}\sum\limits_{j=1}^N\phi_R(\overline{v}_j^{\varepsilon,\delta})(\phi^{\varepsilon}(x_i^{\varepsilon,\delta}-x_j^{\varepsilon,\delta})-\phi^{\varepsilon}(\overline{x}_i^{\varepsilon,\delta}-\overline{x}_j^{\varepsilon,\delta}))\Big)}
{\Big(\frac{1}{N}\sum\limits_{j=1}^N\phi^{\varepsilon}(x_i^{\varepsilon,\delta}-x_j^{\varepsilon,\delta})+\delta\Big)\Big(\frac{1}{N}\sum\limits_{j=1}^N\phi^{\varepsilon}(\overline{x}_i^{\varepsilon,\delta}-\overline{x}_j^{\varepsilon,\delta})+\delta\Big)}\Big|\\
&~~~+\Big|\frac{\frac{1}{N}\sum\limits_{j=1}^N \phi_R(\overline{v}_j^{\varepsilon,\delta})\phi^{\varepsilon}(\overline{x}_i^{\varepsilon,\delta}-\overline{x}_j^{\varepsilon,\delta})\Big(\frac{1}{N}\sum\limits_{j=1}^N(\phi^{\varepsilon}(\overline{x}_i^{\varepsilon,\delta}-\overline{x}_j^{\varepsilon,\delta})-\phi^{\varepsilon}(x_i^{\varepsilon,\delta}-x_j^{\varepsilon,\delta}))\Big)}{\Big(\frac{1}{N}\sum\limits_{j=1}^N\phi^{\varepsilon}(x_i^{\varepsilon,\delta}-x_j^{\varepsilon,\delta})+\delta\Big)\Big(\frac{1}{N}\sum\limits_{j=1}^N\phi^{\varepsilon}(\overline{x}_i^{\varepsilon,\delta}-\overline{x}_j^{\varepsilon,\delta})+\delta\Big)}\Big|\\
&<\max\limits_{1\le j\le N}|v_j^{\varepsilon,\delta}-\overline{v}_j^{\varepsilon,\delta}|
+\max\limits_{1\le j\le N}|\phi_R(\overline{v}_j^{\varepsilon,\delta})|\delta^{-1}\frac{1}{N}\sum\limits_{j=1}^N|\phi^{\varepsilon}(x_i^{\varepsilon,\delta}-x_j^{\varepsilon,\delta})-\phi^{\varepsilon}(\overline{x}_i^{\varepsilon,\delta}-\overline{x}_j^{\varepsilon,\delta})|.
      \end{align*}}
Using Taylor formula, we get
{\footnotesize\begin{align}\label{a}
\frac{1}{N}\sum\limits_{j=1}^N|\phi^{\varepsilon}(x_i^{\varepsilon,\delta}-x_j^{\varepsilon,\delta})-\phi^{\varepsilon}(\overline{x}_i^{\varepsilon,\delta}-\overline{x}_j^{\varepsilon,\delta})|
&\le \frac{C}{N}\sum\limits_{j=1}^N|(\phi^{\varepsilon})'(\overline{x}_i^{\varepsilon,\delta}-\overline{x}_j^{\varepsilon,\delta})||(\overline{x}_i^{\varepsilon,\delta}-\overline{x}_j^{\varepsilon,\delta})-(x_i^{\varepsilon,\delta}-x_j^{\varepsilon,\delta})|\nonumber\\
&~~~+\frac{C}{N}\sum\limits_{j=1}^N\Big|\frac{(\phi^{\varepsilon})''(\overline{x}_i^{\varepsilon,\delta}-\overline{x}_j^{\varepsilon,\delta})}{2}\Big||(\overline{x}_i^{\varepsilon,\delta}-\overline{x}_j^{\varepsilon,\delta})-(x_i^{\varepsilon,\delta}-x_j^{\varepsilon,\delta})|^2\nonumber\\
&~~~+o(|(\overline{x}_i^{\varepsilon,\delta}-\overline{x}_j^{\varepsilon,\delta})-(x_i^{\varepsilon,\delta}-x_j^{\varepsilon,\delta})|^2).
\end{align}}
Since $(X,V)\notin\mathcal{N}_\alpha$, \eqref{a} is written that
      {\footnotesize$$
\frac{1}{N}\sum\limits_{j=1}^N|\phi^{\varepsilon}(x_i^{\varepsilon,\delta}-x_j^{\varepsilon,\delta})-\phi^{\varepsilon}(\overline{x}_i^{\varepsilon,\delta}-\overline{x}_j^{\varepsilon,\delta})|
\le \frac{C}{N}\sum\limits_{j=1}^N|(\phi^{\varepsilon})'(\overline{x}_i^{\varepsilon,\delta}-\overline{x}_j^{\varepsilon,\delta})|N^{-\alpha}
+\frac{C}{N}\sum\limits_{j=1}^N\Big|\frac{(\phi^{\varepsilon})''(\overline{x}_i^{\varepsilon,\delta}-\overline{x}_j^{\varepsilon,\delta})}{2}\Big|N^{-2\alpha}.
      $$}
    Because $(X,V)\notin\mathcal{N}_\mu$ and $(ii)$ of Assumption  \ref{assum}, we get
    \begin{align*}
    \frac{1}{N}\sum\limits_{j=1}^N|(\phi^{\varepsilon'})(\overline{x}_i^{\varepsilon,\delta}-\overline{x}_j^{\varepsilon,\delta})|
    \le\|\int_{\mathbb{R}^d}|(\phi^{\varepsilon'})(\overline{x}_i^{\varepsilon,\delta}-y)|\rho^{\varepsilon,\delta}(t,y)dy\|_\infty+N^{-\mu}\le C,
    \end{align*}
    and thus
    \begin{align*}
    |I_5|:=\Big|\Phi^{\varepsilon,\delta}(X_t^{\varepsilon,\delta})-\Phi^{\varepsilon,\delta}(\overline{X}_t^{\varepsilon,\delta})\Big|_\infty
    \le (1+CR\delta^{-1}+CR\delta^{-1}N^{-\alpha}\varepsilon^{-(d+1)}) S_t(X,V)N^{-\alpha}.
 \end{align*}

  For $|I_6|$. Since $(X,V)\notin\mathcal{N}_\eta$, it follows directly
      $$
      |I_6|:=\Big|\Phi^{\varepsilon,\delta}(\overline{X}_t^{\varepsilon,\delta})-\overline{\Phi}^{\varepsilon,\delta}(\overline{X}_t^{\varepsilon,\delta})\Big|_\infty
      \le N^{-\eta}.
      $$

Combining all the six terms, we have
{\footnotesize\begin{align*}
&\Big|\Big(V_t^{\varepsilon,\delta},\Psi^{\varepsilon,\delta}(X_t^{\varepsilon,\delta})
+\Gamma(X_t^{\varepsilon,\delta},V_t^{\varepsilon,\delta})+\Phi^{\varepsilon,\delta}(X_t^{\varepsilon,\delta})\Big)
-\Big(\overline{V}_t^{\varepsilon,\delta},\overline{\Psi}^{\varepsilon,\delta}(\overline{X}_t^{\varepsilon,\delta})
+\Gamma(\overline{X}_t^{\varepsilon,\delta},\overline{V}_t^{\varepsilon,\delta})+\overline{\Phi}^{\varepsilon,\delta}(X_t^{\varepsilon,\delta})\Big)\Big|_\infty\\
&\le \Big(C+C R\delta^{-1}+C R\delta^{-1}N^{-\alpha}\varepsilon^{-(d+1)}\Big)S_t(X,V)N^{-\alpha}+N^{-\kappa}+N^{-\eta}
\end{align*}}
for all $(X,V)\in(\mathcal{N}_\alpha\cup\mathcal{N}_\kappa\cup\mathcal{N}_\gamma\cup\mathcal{N}_\eta\cup\mathcal{N}_\mu)^c$.
\end{proof}
Now using Lemmas \ref{lm3.1} -- \ref{lm3.4} to complete the prove  of Theorem \ref{th3.1}:
\begin{proof}[ Proof of Theorem \ref{th3.1}]
From \eqref{def2.1-1} and \eqref{def2.2-1}, we know
$$
(X_{t+dt}^{\varepsilon,\delta},V_{t+dt}^{\varepsilon,\delta})
=(X_t^{\varepsilon,\delta},V_t^{\varepsilon,\delta})
+\Big(V_t^{\varepsilon,\delta},\Psi^{\varepsilon,\delta}(X_t^{\varepsilon,\delta})
+\Gamma(X_t^{\varepsilon,\delta},V_t^{\varepsilon,\delta})
+\Phi^{\varepsilon,\delta}(X_t^{\varepsilon,\delta})\Big)dt+o(dt),
$$
and
$$
(\overline{X}_{t+dt}^{\varepsilon,\delta},\overline{V}_{t+dt}^{\varepsilon,\delta})
=(\overline{X}_t^{\varepsilon,\delta},\overline{V}_t^{\varepsilon,\delta})
+\Big(\overline{V}_t^{\varepsilon,\delta},\overline{\Psi}^{\varepsilon,\delta}(\overline{X}_t^{\varepsilon,\delta})
+\Gamma(\overline{X}_t^{\varepsilon,\delta},\overline{V}_t^{\varepsilon,\delta})
+\overline{\Phi}^{\varepsilon,\delta}(\overline{X}_t^{\varepsilon,\delta})\Big)dt+o(dt).
$$
Then
{\footnotesize\begin{align*}
&\Big|(X_{t+dt}^{\varepsilon,\delta},V_{t+dt}^{\varepsilon,\delta})
-(\overline{X}_{t+dt}^{\varepsilon,\delta},\overline{V}_{t+dt}^{\varepsilon,\delta})\Big|_\infty\\
&\le\Big|(X_t^{\varepsilon,\delta},V_t^{\varepsilon,\delta})-(\overline{X}_t^{\varepsilon,\delta},\overline{V}_t^{\varepsilon,\delta})\Big|_\infty+\Big|\Big(V_t^{\varepsilon,\delta},\Psi^{\varepsilon,\delta}(X_t^{\varepsilon,\delta})+\Gamma(X_t^{\varepsilon,\delta},V_t^{\varepsilon,\delta})
+\Phi^{\varepsilon}(X_t^{\varepsilon,\delta})\Big)\\
&~~~-\Big(\overline{V}_t^{\varepsilon,\delta},\overline{\Psi}^{\varepsilon,\delta}(\overline{X}_t^{\varepsilon,\delta}))
+\Gamma(\overline{X}_t^{\varepsilon,\delta},\overline{V}_t^{\varepsilon,\delta})
+\overline{\Phi}^{\varepsilon,\delta}(\overline{X}_t^{\varepsilon,\delta})\Big)\Big|_\infty dt+o(dt),
\end{align*}}
i.e.,
{\footnotesize\begin{align*}
S_{t+dt}-S_t
&\le\Big|\Big(V_t^{\varepsilon,\delta},\Psi^{\varepsilon,\delta}(X_t^{\varepsilon,\delta})
+\Gamma(X_t^{\varepsilon,\delta},V_t^{\varepsilon,\delta})
+\Phi^{\varepsilon}(X_t^{\varepsilon,\delta},V_t^{\varepsilon,\delta})\Big)\\
&~~~-\Big(\overline{V}_t^{\varepsilon,\delta},\overline{\Psi}^{\varepsilon,\delta}(\overline{X}_t^{\varepsilon,\delta}))
+\Gamma(\overline{X}_t^{\varepsilon,\delta},\overline{V}_t^{\varepsilon,\delta})
+\overline{\Phi}^{\varepsilon,\delta}(\overline{X}_t^{\varepsilon,\delta})\Big)\Big|_\infty N^{\alpha}dt+o(dt).
\end{align*}}
Taking the expecting over both sides yields
{\small\begin{align*}
\mathbb{E}_0\Big[S_{t+dt}-S_t\Big]
&=\mathbb{E}_0\Big[S_{t+dt}-S_t|\mathcal{N}_\alpha\Big]+\Big[S_{t+dt}-S_t|\mathcal{N}_\alpha^c\Big]\\
&\le\mathbb{E}_0\Big[S_{t+dt}-S_t|(\mathcal{N}_\kappa\cup\mathcal{N}_\gamma\cup\mathcal{N}_\eta\cup\mathcal{N}_\mu)\setminus \mathcal{N}_\alpha\Big]
+\mathbb{E}_0\Big[S_{t+dt}-S_t|(\mathcal{N}_\alpha\cup\mathcal{N}_\kappa\cup\mathcal{N}_\gamma\cup\mathcal{N}_\eta\cup\mathcal{N}_\mu)^c\Big]\\
&\le\mathbb{E}_0\Big[\Big|V_t^{\varepsilon,\delta}-\overline{V}_t^{\varepsilon,\delta}\Big|_\infty\Big|(\mathcal{N}_\kappa\cup\mathcal{N}_\gamma\cup\mathcal{N}_\eta\cup\mathcal{N}_\mu)\setminus \mathcal{N}_\alpha\Big]N^{\alpha}dt\\
&~~~+\mathbb{E}_0\Big[\Big|\Psi^{\varepsilon,\delta}(X_t^{\varepsilon,\delta})-\overline{\Psi}^{\varepsilon,\delta}(\overline{X}_t^{\varepsilon,\delta})\Big|_\infty\Big|(\mathcal{N}_\kappa\cup\mathcal{N}_\gamma\cup\mathcal{N}_\eta\cup\mathcal{N}_\mu)\setminus \mathcal{N}_\alpha\Big]N^{\alpha}dt\\
&~~~+\mathbb{E}_0\Big[\Big|\Gamma(X_t^{\varepsilon,\delta},V_t^{\varepsilon,\delta})-\Gamma(\overline{X}_t^{\varepsilon,\delta},\overline{V}_t^{\varepsilon,\delta})\Big|_\infty\Big|(\mathcal{N}_\kappa\cup\mathcal{N}_\gamma\cup\mathcal{N}_\eta\cup\mathcal{N}_\mu)\setminus \mathcal{N}_\alpha\Big]N^{\alpha}dt\\
&~~~+\mathbb{E}_0\Big[\Big|\Phi^{\varepsilon}(X_t^{\varepsilon,\delta})-\overline{\Phi}^{\varepsilon,\delta}(\overline{X}_t^{\varepsilon,\delta})\Big|_\infty\Big|(\mathcal{N}_\kappa\cup\mathcal{N}_\gamma\cup\mathcal{N}_\eta\cup\mathcal{N}_\mu)\setminus \mathcal{N}_\alpha\Big]N^{\alpha}dt\\
&~~~+\mathbb{E}_0\Big[S_{t+dt}-S_t|(\mathcal{N}_\alpha\cup\mathcal{N}_\kappa\cup\mathcal{N}_\gamma\cup\mathcal{N}_\eta)^c\Big]
+o(dt)\\
&=:J_1+J_2+J_3+J_4+J_5+o(dt).
\end{align*}}

 For $J_1$.
 Since $(X,V)\notin\mathcal{N}_\alpha$, we get
 {\footnotesize\begin{align*}
  J_1=\mathbb{E}_0\Big[\Big|V_t^{\varepsilon,\delta}-\overline{V}_t^{\varepsilon,\delta}\Big|_\infty
  \Big|(\mathcal{N}_\kappa\cup\mathcal{N}_\gamma\cup\mathcal{N}_\eta\cup\mathcal{N}_\mu)\setminus \mathcal{N}_\alpha\Big]N^\alpha dt
  \le N^{-\alpha}\Big(\mathbb{P}_0(\mathcal{N}_\kappa)+\mathbb{P}_0(\mathcal{N}_\gamma)+\mathbb{P}_0(\mathcal{N}_\eta)+\mathbb{P}_0(\mathcal{N}_\mu)\Big)
  N^\alpha dt.
\end{align*}}

   For $ J_2$. Due to the definition of $\Psi^{\varepsilon,\delta},\overline{\Psi}^{\varepsilon,\delta}$ as well as the boundedness of $\nabla_xW^{\varepsilon}$, we achieve
      \begin{align*}
      J_2&\le(\|\nabla_xW^{\varepsilon}\|_\infty
      +\|\nabla_xW^{\varepsilon}\ast\rho^{\varepsilon,\delta}\|_\infty)
      \Big(\mathbb{P}_0(\mathcal{N}_\kappa)+\mathbb{P}_0(\mathcal{N}_\gamma)+\mathbb{P}_0(\mathcal{N}_\eta)+\mathbb{P}_0(\mathcal{N}_\mu)\Big)
  N^\alpha dt\\
  &\le C\varepsilon^{-(d-1)}\Big(\mathbb{P}_0(\mathcal{N}_\kappa)+\mathbb{P}_0(\mathcal{N}_\gamma)+\mathbb{P}_0(\mathcal{N}_\eta)+\mathbb{P}_0(\mathcal{N}_\mu)\Big)
  N^\alpha dt.
      \end{align*}

   For $J_3$. Since $G(x,v)$ is Lipschitz continuous and $(X,V)\notin\mathcal{N}_\alpha$, we have
      \begin{align*}
      J_3&:=\mathbb{E}_0\Big[\Big|\Gamma(X_t^{\varepsilon,\delta},V_t^{\varepsilon,\delta})-\Gamma(\overline{X}_t^{\varepsilon,\delta},\overline{V}_t^{\varepsilon,\delta})\Big|_\infty\Big|(\mathcal{N}_\kappa\cup\mathcal{N}_\gamma\cup\mathcal{N}_\eta\cup\mathcal{N}_\mu)\setminus \mathcal{N}_\alpha\Big]N^{\alpha}dt\\
      &\le L \mathbb{E}_0\Big[\Big|(X_t^{\varepsilon,\delta},V_t^{\varepsilon,\delta})-(\overline{X}_t^{\varepsilon,\delta},\overline{V}_t^{\varepsilon,\delta})\Big|_\infty\Big|(\mathcal{N}_\kappa\cup\mathcal{N}_\gamma\cup\mathcal{N}_\eta\cup\mathcal{N}_\mu)\setminus \mathcal{N}_\alpha\Big]N^{\alpha}dt\\
      &\le LN^{-\alpha}
      \Big(\mathbb{P}_0(\mathcal{N}_\kappa)+\mathbb{P}_0(\mathcal{N}_\gamma)+\mathbb{P}_0(\mathcal{N}_\eta)+\mathbb{P}_0(\mathcal{N}_\mu)\Big)
  N^\alpha dt.
      \end{align*}

  For $J_4$. Due to the definition of $\Phi^{\varepsilon,\delta}$, we achieve
  \begin{align*}
  \|u^{\varepsilon,\delta}\|_\infty&=\Big|\frac{\frac{1}{N}\sum\limits_{j=1}^N \phi_R(v_j^{\varepsilon,\delta})\phi^{\varepsilon}(x_i^{\varepsilon,\delta}-x_j^{\varepsilon,\delta})}
{\frac{1}{N}\sum\limits_{j=1}^N\phi^{\varepsilon}(x_i^{\varepsilon,\delta}-x_j^{\varepsilon,\delta})+\delta}\Big|\\
&\le\max\limits_{1\le j\le N}|\phi_R(v_j^{\varepsilon,\delta})|\Big|\frac{\frac{1}{N}\sum\limits_{j=1}^N \phi^{\varepsilon}(x_i^{\varepsilon,\delta}-x_j^{\varepsilon,\delta})}
{\frac{1}{N}\sum\limits_{j=1}^N\phi^{\varepsilon}(x_i^{\varepsilon,\delta}-x_j^{\varepsilon,\delta})+\delta}\Big|\\
&<\max\limits_{1\le j\le N}|\phi_R(v_j^{\varepsilon,\delta})|\le2R.
\end{align*}
Similarly, from the definition of $\overline{\Phi}^{\varepsilon,\delta}$ and $(i)$ of Assumption \ref{assum}, we have
\begin{align*}
\|\overline{u}^{\varepsilon,\delta}\|_\infty&=\Big|\frac{\int_{\R^{2d}}v\phi^{\varepsilon}(\overline{x}_i^{\varepsilon,\delta}-y) f^{\varepsilon,\delta}(t,y,v)dvdy}{\int_{\R^{2d}}\phi^{\varepsilon}(\overline{x}_i^{\varepsilon,\delta}-y) f^{\varepsilon,\delta}(t,y,v)dvdy+\delta}\Big|\\
&\le\Big|\frac{\int_{\Rd}\phi^{\varepsilon}(\overline{x}_i^{\varepsilon,\delta}-y)(\int_{\Rd} vf^{\varepsilon,\delta}(t,y,v)dy)}{\delta}\Big|\\
&\le\frac{C}{\delta}.
\end{align*}
Then
  \begin{align*}
  J_4&\le\beta(\|u^{\varepsilon,\delta}\|_\infty+\|\overline{u}^{\varepsilon,\delta}\|_\infty)\Big(\mathbb{P}_0(\mathcal{N}_\kappa)+\mathbb{P}_0(\mathcal{N}_\gamma)+\mathbb{P}_0(\mathcal{N}_\eta)+\mathbb{P}_0(\mathcal{N}_\mu)\Big)
  N^\alpha dt\\
  &\le C\beta(R+\frac{1}{\delta})\Big(\mathbb{P}_0(\mathcal{N}_\kappa)+\mathbb{P}_0(\mathcal{N}_\gamma)+\mathbb{P}_0(\mathcal{N}_\eta)+\mathbb{P}_0(\mathcal{N}_\mu)\Big)
  N^\alpha dt.
  \end{align*}

 For $J_5$. From Lemma \ref{lm3.4}, we obtain
  \begin{align*}
  J_5&:=\mathbb{E}_0\Big[S_{t+dt}-S_t|(\mathcal{N}_\alpha\cup\mathcal{N}_\kappa\cup\mathcal{N}_\gamma\cup\mathcal{N}_\eta\cup\mathcal{N}_\mu)^c\Big]\\
  &\le \Big((C+CR\delta^{-1}+CR\delta^{-1}N^{-\alpha}\varepsilon^{-(d+1)})\mathbb{E}_0[S_t]N^{-\alpha}+N^{-\kappa}+N^{-\eta}\Big)
  N^{\alpha}dt+o(dt)\\
  &=(C+CR\delta^{-1}+CR\delta^{-1}N^{-\alpha}\varepsilon^{-(d+1)})\mathbb{E}_0[S_t]dt+N^{\alpha-\kappa}dt+N^{\alpha-\eta}dt+o(dt).
  \end{align*}
  Therefore, we can determine the estimate
  {\small\begin{align*}
  \mathbb{E}_0\Big[S_{t+dt}\Big]-\mathbb{E}_0[S_t]
  &\le\mathbb{E}_0\Big[S_{t+dt}-S_t\Big]\\
  &\le C\Big(N^{-\alpha}+\varepsilon^{-(d-1)}+R+\frac{1}{\delta}\Big)
   \Big(\mathbb{P}_0(\mathcal{N}_\kappa)+\mathbb{P}_0(\mathcal{N}_\gamma)+\mathbb{P}_0(\mathcal{N}_\eta)+\mathbb{P}_0(\mathcal{N}_\mu)\Big)
  N^\alpha dt\\
  &~~~+(C+CR\delta^{-1}+CR\delta^{-1}N^{-\alpha}\varepsilon^{-(d+1)})\mathbb{E}_0[S_t]dt+N^{\alpha-\kappa}dt+N^{\alpha-\eta}dt+o(dt)\\
  &\le (C+CR\delta^{-1}+CR\delta^{-1}N^{-\alpha}\varepsilon^{-(d+1)})\mathbb{E}_0[S_t]dt +C\Big(N^{-\alpha}+\varepsilon^{-(d-1)}+R+\frac{1}{\delta}\Big)\\
  &~~~~~~~\cdot\max\Big\{\mathbb{P}_0(\mathcal{N}_\kappa)N^\alpha,\,\mathbb{P}_0(\mathcal{N}_\gamma)N^\alpha,\,\mathbb{P}_0(\mathcal{N}_\eta)N^\alpha,\,\mathbb{P}_0(\mathcal{N}_\mu)N^\alpha,\,N^{\alpha-\kappa},\,N^{\alpha-\eta}\Big\}dt+o(dt).
\end{align*}}
Equivalently, we have
\begin{align*}
\frac{d}{dt}\mathbb{E}_0[S_t]
&\le (C+CR\delta^{-1}+CR\delta^{-1}N^{-\alpha}\varepsilon^{-(d+1)})\mathbb{E}_0[S_t]
+C\Big (N^{-\alpha}+\varepsilon^{-(d-1)}+R+\frac{1}{\delta}\Big)\\
&~~~~~\cdot\max\Big\{\mathbb{P}_0(\mathcal{N}_\kappa)N^\alpha,\,\mathbb{P}_0(\mathcal{N}_\gamma)N^\alpha,\,\mathbb{P}_0(\mathcal{N}_\eta)N^\alpha,\,\mathbb{P}_0(\mathcal{N}_\mu)N^\alpha,\,N^{\alpha-\kappa},\,N^{\alpha-\eta}\Big\}.
\end{align*}
Hence, Gronwall's inequality yields
{\footnotesize\begin{align*}
\mathbb{E}_0[S_t]&\le e^{(C+CR\delta^{-1}+CR\delta^{-1}N^{-\alpha}\varepsilon^{-(d+1)})t}
\\&~~~\cdot \Big[C\Big(N^{-\alpha}+\varepsilon^{-(d-1)}+R+\frac{1}{\delta}\Big)
\cdot\max\Big\{\mathbb{P}_0(\mathcal{N}_\kappa)N^\alpha,\,\mathbb{P}_0(\mathcal{N}_\gamma)N^\alpha,\,\mathbb{P}_0(\mathcal{N}_\eta)N^\alpha,\,\mathbb{P}_0(\mathcal{N}_\mu)N^\alpha,\,N^{\alpha-\kappa},\,N^{\alpha-\eta}\Big\}\Big].
\end{align*}}
We choose $\varepsilon=N^{-\theta}, R=\delta^{-1}=\sqrt{\vartheta \ln(N)}$, then we achieve
$$
\mathbb{E}_0[S_t]\le C \exp\Big\{\Big(C+C\vartheta\ln(N)\Big)t\Big\}\cdot N^{-n}.
$$
\end{proof}



\bibliographystyle{abbrv}
\bibliography{reference}

\end{document}